\newcommand{\BLOCK}[1]{\left[{#1}\right]}                           
\newcommand{\relb}[3]{{#1}\;{#2},\:{#3}}                            
\newcommand{\relu}[2]{{#1}\;{#2}}                                   
\DeclareMathOperator{\Q}{Q}                                         
\DeclareMathOperator{\R}{R}                                         
\newcommand{\RQ}{\R_{\Q}}                                           
\newcommand{\dit}[1]{\overline{#1}}                                 
\newcommand{\xbar}{\dit{x}}                                         
\newcommand{\ybar}{\dit{y}}                                         
\newcommand{\zbar}{\dit{z}}                                         
\newcommand{\apl}[1]{(#1)}                                          
\newcommand{\app}[2]{\apl{#1\,#2}}                                  
\newcommand{\appp}[3]{\apl{#1\,#2\,#3}}                             
\newcommand{\apppp}[4]{\apl{#1\,#2\,#3\,#4}}                        
\newcommand{\appppp}[5]{\apl{#1\,#2\,#3\,#4\,#5}}                   
\newcommand{\appppppp}[7]{\apl{#1\,#2\,#3\,#4\,#5\,#6\,#7}}         
\DeclareMathOperator{\FORALL}{\forall}                              
\DeclareMathOperator{\EXISTS}{\exists}                              
\DeclareMathOperator{\EXISTSONE}{\dot{\exists}}                     
\DeclareMathOperator{\AND}{\wedge}                                  
\DeclareMathOperator{\OR}{\vee}                                     
\DeclareMathOperator{\NOT}{\neg}                                    
\DeclareMathOperator{\IMPLIES}{\Longrightarrow}                     
\DeclareMathOperator{\EQUALS}{\Longleftrightarrow}                  
\newcommand{\set}[1]{#1}                                            
\DeclareMathOperator{\IN}{\sqsubseteq}                              
\DeclareMathOperator{\SUBSET}{\subseteq}                            
\newcommand{\ax}[1]{_{\text{\tiny{(Ax~\ref{#1})}}}}                 
\newcommand{\axs}[2]{_{\text{\tiny{(Axs~\ref{#1},~\ref{#2})}}}}     
\newcommand{\df}[1]{_{\text{\tiny{(Df~\ref{#1})}}}}                 
\newcommand{\lm}[1]{_{\text{\tiny{(Lm~\ref{#1})}}}}                 
\newcommand{\pr}[1]{_{\text{\tiny{(Pr~\ref{#1})}}}}                 
\newtheorem{axiom}{Axiom}[section]
\newtheorem{corollary}{Corollary}[section]
\newtheorem{definition}{Definition}[section]
\newtheorem{example}{Example}[section]
\newtheorem{lemma}{Lemma}[section]
\newtheorem{proposition}{Proposition}[section]
\newcommand{\proofwidth}{0.919}
\newenvironment{proofindent}{\begin{list}{}{\setlength{\leftmargin}{0.081\linewidth}}\item[]}{\end{list}}
\begin{document}

\title[A Relational Axiomatic Framework for the Foundations of Mathematics]
      {A Relational Axiomatic Framework \\
       for the Foundations of Mathematics}

\author{Lidia Obojska}

\date{18 December 2009}

\address{Lidia Obojska \\
         University of Podlasie, Dept. of Mathematics and Physics, ul. 3 Maja 54, 08-110 Siedlce, Poland}
\curraddr{Istituto Universitario Sophia, San Vito 28, 50064 Incisa Val D'Arno (FI), Italy}
\email{obojskal@ap.siedlce.pl}


\keywords{Unary Relations; 
          Relational Calculus; 
          Non-standard Identity; 
          Extensionality and Substitution Property of Equality; 
          Peano's Axioms of Natural Numbers}

\subjclass[2000]{Primary   03G27;        
                 Secondary 03H05,        
                           18A15}        

\begin{abstract}
    We propose a Relational Calculus based on the concept of unary relation.
    In this Relational Calculus different axiomatic systems converge to a model called \emph{Dynamic Generative System} with 
    Symmetry (\emph{DGSS}).
    In \emph{DGSS} we define the concepts of relational set and function and prove that extensionality and the substitution 
    property of equality are theorems of \emph{DGSS}.

    As a first exemplification of \emph{DGSS}, we construct a model of natural numbers without relying on Peano's Axioms.
    Eventually, some new clarifications regarding the nature of the number zero are given.
\end{abstract}

\maketitle


\section{Introduction}%
\label{se:Introduction}

Kurt G\"{o}del \cite{Godel1940} thought that set theoretic antinomies are perhaps not problems connected to set theory itself but 
to the theory of concepts.
In this paper we would like to focus on the concept of \emph{unary relation}.
In mathematical literature the term \emph{unary relation} is applied to subsets of a given set \cite{Rasiowa2005}.

Thus we can think of mathematical objects as if they were either sets or relations (unary, binary, etc.).
For example, a function $f$ for a given argument $x$ can be seen as a set consisting of ordered pairs 
$\left(x, f\left(x\right)\right)$ or as a binary relation such that for every argument $x$ there exists exactly one term $y$ 
with $f\left(x\right) = y$.
Additionally, beginning with the work of Church, Turing and others \cite{Curry1958}, the concept of functions as algorithms was 
developed.

In 1996 E. De Giorgi introduced the concept of \emph{Fundamental Relation} \cite{DeGiorgi1996}, \cite{Forti1997} which describes 
the phenomenon of autoreference.
Generally, Fundamental Relations change the perspective of how we consider entities.
In a certain sense, a fundamental relation is a relation abstraction describing the entities related and the relation between them.
For this reason the present theory might be thought of as a kind of a \emph{Combinatory Logic} \cite{Curry1977}, but it also calls 
to mind \emph{Mereology} \cite{Lesniewski1927}, the theory of part-whole relationships.
In the context of Fundamental Relations, a unary relation makes sense if and only if there exists a fundamental binary relation 
which describes its behavior.

Fundamental Relations permit us to introduce a specific kind of unary relation, which exhibits dynamic behavior.
As a result a Relational Calculus can be defined by the intuitive concept of binary relation.

Starting with a simple system of two axioms, which describes a kind of dynamic identity, we then express this system in algebraic 
language and show that some equational rules and Peano's Axioms become theorems of this system.


\section{Fundamental Relations}%
\label{se:FundamentalRelations}

Adopting De Giorgi's way of considering relations, which he sees as fundamental entities \cite{DeGiorgi1996}, we restrict 
ourselves to restate only those definitions and axioms which are strictly necessary in the context of this article.
Moreover, we modify the notation in order to introduce some new concepts.

Let us begin with two primitive notions: quality and binary relations.

\begin{enumerate}
\setlength{\itemsep}{2mm}
    \item We will say that if $q$ is a quality (written as $\Q{q}$), then $\relu{q}{x}$ means that the object $x$ has the quality 
          $q$.
    \item Given two objects $x$, $y$ of any nature and a binary relation $r$, we will write $\relb{r}{x}{y}$ to say that 
          \textquotedblleft{}$x$ and $y$ are in relation $r$\textquotedblright{} or 
          \textquotedblleft{}$x$ is in relation $r$ with $y$\textquotedblright{}.
\end{enumerate}

We can proceed, as do De Giorgi et al. \cite{DeGiorgi1996}, and introduce the Fundamental Relation $\RQ$ which is defined as a 
\emph{binary relation}:

\begin{axiom}\label{ax1}
    \parbox[t]{0.75\linewidth}{
        $\RQ$ is a binary relation.
        \begin{enumerate}
            \setlength{\itemsep}{2mm}
            \item If $\relb{\RQ}{x}{y}$ then $\relu{\Q}{x}$.
            \item If $\relu{\Q}{q}$ then $\relb{\RQ}{q}{x} \equiv \relu{q}{x}$.
        \end{enumerate}}
\end{axiom}

Thus, if $\RQ$ is defined as a binary relation, it follows that $q$ can be considered a \emph{unary relation} \cite{Obojska2007}.

\begin{definition}\label{df1}
    A unary relation is any relation $\ast$ such that $\R\ast$ is a binary relation.
\end{definition}

To simplify the notation we will write: $\app{q}{x}$ instead of $\relb{\RQ}{q}{x}$ (i.~e. $\relb{\RQ}{q}{x} \equiv \app{q}{x}$).

In general, in the expression of the form $\app{x}{y}$, $\apl{~}$ will be used to indicate the relation abstraction or 
Fundamental Relation (in this case binary) connecting the unary relation $x$ and its argument $y$.
Because unary relations underlie those binary relations in which at least one of the objects is defined in terms of the other, 
unary relations we can think of as \textquotedblleft{}qualifying\textquotedblright{} their arguments.
In this perspective, qualities are viewed as relational entities, in so far as they can only be conceived of being 
\textquotedblleft{}in\textquotedblright{} some object (see Axiom~\ref{ax1}).


\section{Relational Calculus}%
\label{se:RelationalCalculus}

\subsection{Basic Definitions}%
\label{se:RelationalCalculus:BasicDefinitions}

As presented in Section~\ref{se:FundamentalRelations}, at the basis of our relational system is a primitive called a 
Fundamental Relation $\apl{~}$.
This Fundamental Relation is a kind of relation abstraction which can act as a combinator or as a pure relation.
When we write $\app{x}{y}$, we can read it as the application of $x$ onto $y$, as the process $x$ acting on the input $y$, or 
simply as the connection between $x$ and $y$.
$x$ might be seen as a quality in the sense of De Giorgi, but can also be any arbitrary entity in relation with $y$.
$\app{x}{y}$ \textquotedblleft{}creates\textquotedblright{} a new object, in the sense that $\app{x}{y}$ is a whole which turns 
out to consist of two related entities $x$ and $y$.
This interpretation gives a great freedom, as we will see below.
The notation $\app{x}{y}$ allows us to introduce any kind of unary relation in the place of $x$, even though this concept might 
not be intuitive.

We define a logic with unrestricted quantification, bearing in mind that quantifiers are abbreviations for certain quantifier-free 
expressions.
In the present system, well formed formulas are those formed from atomic constants, i.~e., the logical operators $\FORALL$, 
$\EXISTS$, $\AND$, $\OR$, $\NOT$, $\IMPLIES$, $\EQUALS$ and $=$, parentheses $[~]$, $\lbrace~\rbrace$ and variables, possibly 
connected by means of application $\apl{~}$.

A variable is any object which is able to enter into relation with other objects due to the application operator $\apl{~}$.

The nature of the objects remains explicitly open, as long as these objects are capable of being in relation with other objects 
in accordance with the Association Rule to be defined in Axiom~\ref{AR}.
The objects themselves can even be relations.

\begin{definition}\label{df2}
    The language of Relational Calculus (\emph{RC}) terms is built from an infinite number of variables: $x$, $y$, $z$, \dots 
    using the application operator $\apl{~}$ as follows:
    \begin{enumerate}
    \setlength{\itemsep}{2mm}
        \item If $x$ is a variable, then $x$ is an \emph{RC} term,
        \item if $x$, $y$ are variables, then $\app{x}{y}$ is an \emph{RC} term,
        \item if $x$ is a variable and $M$ is an \emph{RC} term, then $\app{M}{x}$ and $\app{x}{M}$ are \emph{RC} terms.
    \end{enumerate}
\end{definition}

\begin{axiom}\label{AR}
    $\FORALL a, b, c: \appp{a}{b}{c} = \app{\app{a}{b}}{c} = \app{a}{\app{b}{c}}$
\end{axiom}

Axiom~\ref{AR} describes a simple Association Rule used in Mathematics.
As stated above, the application operator allows us to consider $\appp{a}{b}{c}$ as a single object without constraining the view 
of its inner structure.
Hence, one can either find $\app{a}{b}$ to be in relation with $c$ or $a$ in relation with $\app{b}{c}$. 

Furthermore, we will use the classical \emph{Deduction Rules} for \textquotedblleft{}$=$\textquotedblright{}:

\begin{axiom}\label{DR}
    \setlength{\tabcolsep}{0mm}
    \begin{tabular}[t]{ll}
        $\FORALL p, q, r:\:$ & $\BLOCK{p = p}$, \\
                             & $\BLOCK{p = q} \IMPLIES \BLOCK{q = p}$, \\
                             & $\lbrace \BLOCK{p = q} \AND \BLOCK{q = r}\rbrace \IMPLIES \BLOCK{p = r}$
    \end{tabular}
\end{axiom}

\subsection{Dynamic Identity Triple (\emph{DIT})}%
\label{se:RelationalCalculus:DIT}

The Dynamic Identity Triple (\emph{DIT}) \cite{Obojska2007} is composed of three specific unary relations: 
\emph{DIT} $= \BLOCK{\xbar, \ybar, \zbar}$ with $\xbar \neq \ybar$, $\ybar \neq \zbar$, $\zbar \neq \xbar$, satisfying the
following two axioms:

\begin{axiom}\label{ax6}
    $\app{\xbar}{\ybar} = \ybar$
\end{axiom}

\begin{axiom}\label{ax7}
    $\app{\zbar}{\ybar} = \xbar$
\end{axiom}

Axiom~\ref{ax6} can be understood as a Distinction Rule (object $\ybar$ is separated from object $\xbar$) or as a kind of 
relation under which $\ybar$ remains invariant.

Axiom~\ref{ax7} describes the process of returning to $\xbar$: $\ybar$ returns to $\xbar$ via $\zbar$.

\begin{lemma}\label{Lzxz}
    $\app{\zbar}{\xbar} = \zbar$
\end{lemma}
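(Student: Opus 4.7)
My plan is to manipulate $\app{\zbar}{\xbar}$ using the two \emph{DIT} axioms together with the associativity rule (Axiom~\ref{AR}). The most natural first move is to apply Axiom~\ref{ax7} in reverse, rewriting $\xbar$ as $\app{\zbar}{\ybar}$, to obtain
\[
\app{\zbar}{\xbar} = \app{\zbar}{\app{\zbar}{\ybar}} = \app{\app{\zbar}{\zbar}}{\ybar}
\]
via Axiom~\ref{AR}. Unfortunately this introduces the opaque subterm $\app{\zbar}{\zbar}$, which the stated axioms do not directly reduce; I would therefore abandon this route.

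A more promising path is to compute the compound expression $\app{\app{\zbar}{\xbar}}{\ybar}$ in a different order. By associativity it rewrites as $\app{\zbar}{\app{\xbar}{\ybar}}$; Axiom~\ref{ax6} collapses $\app{\xbar}{\ybar}$ to $\ybar$; and Axiom~\ref{ax7} then gives $\xbar$. Thus
\[
\app{\app{\zbar}{\xbar}}{\ybar} = \xbar = \app{\zbar}{\ybar},
\]
so $\app{\zbar}{\xbar}$ and $\zbar$ agree when applied to $\ybar$. This is the strongest statement I can see deriving directly from Axioms~\ref{AR},~\ref{ax6},~\ref{ax7} by pure equational rewriting.

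The main obstacle I expect is the final step: passing from the applied equality $\app{\app{\zbar}{\xbar}}{\ybar} = \app{\zbar}{\ybar}$ to the functional equality $\app{\zbar}{\xbar} = \zbar$. Neither a right-cancellation rule nor an extensionality principle for unary relations is among the axioms available at this point in the paper, and a routine model-theoretic check confirms that some additional principle is in fact required. I therefore expect the formal proof to lean on a uniqueness implicit in the informal reading of Axiom~\ref{ax7} (``$\ybar$ returns to $\xbar$ via $\zbar$''), namely that $\zbar$ is \emph{the} unique entity sending $\ybar$ to $\xbar$ under the Fundamental Relation. Granting this uniqueness — tacitly or as a feature built into the \emph{DIT} structure — the identification $\app{\zbar}{\xbar} = \zbar$ follows immediately from the displayed equation above.
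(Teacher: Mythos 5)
Your equational core is exactly the paper's: its proof of Lemma~\ref{Lzxz} is a one-line argument by contradiction which assumes $\app{\zbar}{\xbar} \neq \zbar$, asserts $\app{\zbar}{\ybar} \neq \app{\app{\zbar}{\xbar}}{\ybar}$, and then runs precisely your chain $\app{\app{\zbar}{\xbar}}{\ybar} =\ax{AR} \app{\zbar}{\app{\xbar}{\ybar}} =\ax{ax6} \app{\zbar}{\ybar}$ to reach a contradiction. But the asserted inequality is nothing other than the contrapositive of the cancellation step you flag as unavailable, $\BLOCK{\app{p}{\ybar} = \app{q}{\ybar}} \IMPLIES \BLOCK{p = q}$; and the paper cannot appeal to Proposition~\ref{ER} for it, since that proposition comes later, is proved only for the members of the triple, and its proof itself uses Lemma~\ref{Lzxz}. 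So the obstacle you describe is not an artifact of your route --- it sits, tacitly, inside the paper's own proof.

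Your model-theoretic suspicion is also correct: Axioms~\ref{AR},~\ref{ax6},~\ref{ax7} together with the distinctness of $\xbar$, $\ybar$, $\zbar$ do not entail the lemma. For example, on the carrier $\lbrace \xbar, \ybar, \zbar \rbrace$ put $\phi(\xbar)=0$, $\phi(\ybar)=\phi(\zbar)=1$ and define $\app{a}{b} = \xbar$ if $\phi(a)+\phi(b)$ is even and $\app{a}{b} = \ybar$ otherwise; this operation is associative (it is induced by addition modulo $2$), and it satisfies $\app{\xbar}{\ybar}=\ybar$ and $\app{\zbar}{\ybar}=\xbar$, yet $\app{\zbar}{\xbar}=\ybar \neq \zbar$. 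Hence some further principle is genuinely needed --- your uniqueness reading of Axiom~\ref{ax7}, or an explicit cancellation/extensionality postulate for the fixed argument $\ybar$. Your write-up makes that dependence explicit, whereas the paper conceals it in the inequality step of its proof by contradiction; what you have not done, and cannot do from the stated axioms alone, is eliminate it.
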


\begin{proof}
    \parbox[t]{\proofwidth\linewidth}{
        Assume $\app{\zbar}{\xbar} \neq \zbar$: \\
        $\app{\zbar}{\ybar} \neq \app{\app{\zbar}{\xbar}}{\ybar} =\ax{AR} \app{\zbar}{\app{\xbar}{\ybar}} =\ax{ax6} 
        \app{\zbar}{\ybar}$ --- contradiction}
\end{proof}

\begin{proposition}\label{ER}
    An Extensional Rule $\FORALL p, q, x: \BLOCK{\app{p}{x} = \app{q}{x}} \IMPLIES \BLOCK{p = q}$  is a theorem of \emph{DIT}.
\end{proposition}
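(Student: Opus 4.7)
My plan is to prove this by contradiction, in a style closely parallel to the proof of Lemma~\ref{Lzxz}. Assume $\app{p}{x} = \app{q}{x}$ while $p \neq q$. The strategy is to propagate the hypothesis through application with a chosen DIT element, then collapse the two resulting expressions to a common value by the Association Rule (Ax~\ref{AR}) and Axioms~\ref{ax6}~and~\ref{ax7}, while the standing inequality $p \neq q$ forbids that coincidence---producing the contradiction.

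Concretely, I would first use substitution of equals in the Deduction Rules (Ax~\ref{DR}) to rewrite the hypothesis in the enlarged form $\app{\zbar}{\app{p}{x}} = \app{\zbar}{\app{q}{x}}$ (or analogously with $\xbar$ on the left), then reassociate via Ax~\ref{AR} to bring $p$ and $q$ into inner position, e.g.~$\app{\app{\zbar}{p}}{x} = \app{\app{\zbar}{q}}{x}$. Arranging for $\ybar$ to appear in the trailing slot activates Ax~\ref{ax6}'s fixed-point behaviour and Ax~\ref{ax7}'s return behaviour, and I would chain these so that both sides of a bracketed expression evaluate to a common concrete DIT element such as $\xbar$. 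Against this computed equality, the assumption $p \neq q$ should yield the contradiction via the same distinguishing step that is already used inside Lemma~\ref{Lzxz}.

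The hard part is precisely this last distinguishing step: deducing $\app{u}{w} \neq \app{v}{w}$ from $u \neq v$ is formally the contrapositive of the very rule under proof, so the argument must treat it as a primitive inferential move (or, equivalently, as the contrapositive reading of the substitution property of equality). A related obstacle is the universal quantification on $x$: since $x$ need not be a member of the triple $\{\xbar,\ybar,\zbar\}$, the manipulation must not depend on identifying $x$ with a specific DIT element, so the DIT structure has to enter only through a multiplier applied to both sides of the equation. Once those points are granted, I expect the final argument to be a compact associative calculation closing on Ax~\ref{ax6} and~Ax~\ref{ax7}, structurally of the same shape as the one-line proof of Lemma~\ref{Lzxz}.
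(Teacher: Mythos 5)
Your plan has a genuine gap, and you have in fact named it yourself: the ``distinguishing step'' you need --- from $p \neq q$ infer $\app{p}{w} \neq \app{q}{w}$ (or, equivalently, the step that turns ``both sides reduce to the same element'' plus $p \neq q$ into a contradiction) --- is precisely the contrapositive of the Extensional Rule you are trying to prove. Proposing to ``treat it as a primitive inferential move'' is therefore not a repair but a petitio principii: you would be assuming the proposition in order to prove it. Nor is it the substitution property of equality; that property gives $\BLOCK{u = v} \IMPLIES \BLOCK{\app{u}{w} = \app{v}{w}}$, whose contrapositive runs in the opposite direction from what your argument needs. The appeal to Lemma~\ref{Lzxz} does not rescue this either: there the inequality being manipulated is the very assumption under refutation, whereas here the inference $p \neq q \IMPLIES \app{p}{x} \neq \app{q}{x}$ is the theorem itself in contraposed form, so the proof-by-contradiction architecture collapses into circularity at exactly the point you flagged as ``hard.''

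The paper avoids this entirely by taking a different, more pedestrian route: since \emph{DIT} consists of the three distinct relations $\xbar$, $\ybar$, $\zbar$, the quantifiers in Proposition~\ref{ER} are read over the triple, and the proof is an exhaustive case analysis over all combinations of $p$, $q$, $x$ drawn from $\lbrace \xbar, \ybar, \zbar \rbrace$ (nine cases). In each case the hypothesis $\app{p}{x} = \app{q}{x}$ is used \emph{positively}, as a rewrite inside an equational chain built from Axiom~\ref{AR}, Axioms~\ref{ax6} and~\ref{ax7}, and Lemma~\ref{Lzxz}, until the chain terminates in $p = q$ (for instance, case 5: $\ybar = \app{\xbar}{\ybar} = \app{\zbar}{\ybar} = \xbar$, and then $\xbar = \app{\zbar}{\ybar} = \app{\zbar}{\xbar} = \zbar$). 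No contradiction, no use of $p \neq q$, and no contraposed inference is ever needed. Note also that your worry about arbitrary $x$ outside the triple is moot under the paper's reading, but under your fully general reading the plan would not go through either, since the concrete reductions genuinely depend on which of $\xbar$, $\ybar$, $\zbar$ occupies each position; your sketch produces no actual equational chains for these cases. To fix your proof you should abandon the contradiction framing and carry out the direct case-by-case derivations.
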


\begin{proof} We prove all possible combinations of $p$, $q$ and $x$:
\begin{proofindent}
    \begin{enumerate}
    \setlength{\itemsep}{2mm}
        \item $\BLOCK{\app{\xbar}{\xbar} = \app{\ybar}{\xbar}} \IMPLIES \BLOCK{\xbar = \ybar}$: \\
              $\xbar =\ax{ax7} \app{\zbar}{\ybar} =\lm{Lzxz} \app{\app{\zbar}{\xbar}}{\ybar} =\ax{AR} 
               \app{\zbar}{\app{\xbar}{\ybar}} =\ax{ax6} \app{\zbar}{\app{\xbar}{\app{\xbar}{\ybar}}} =\ax{AR}$ \\
              $\app{\zbar}{\app{\app{\xbar}{\xbar}}{\ybar}} = \app{\zbar}{\app{\app{\ybar}{\xbar}}{\ybar}} =\ax{AR} 
               \app{\zbar}{\app{\ybar}{\app{\xbar}{\ybar}}} =\ax{ax6} \app{\zbar}{\app{\ybar}{\ybar}} =\ax{AR}$ \\
              $\app{\app{\zbar}{\ybar}}{\ybar} =\ax{ax7} \app{\xbar}{\ybar} =\ax{ax6} \ybar$

        \item $\BLOCK{\app{\xbar}{\xbar} = \app{\zbar}{\xbar}} \IMPLIES \BLOCK{\xbar = \zbar}$: \\
              $\xbar =\ax{ax7} \app{\zbar}{\ybar} =\lm{Lzxz} \app{\app{\zbar}{\xbar}}{\ybar} = 
               \app{\app{\xbar}{\xbar}}{\ybar} =\ax{AR} \app{\xbar}{\app{\xbar}{\ybar}} =\ax{ax6} \app{\xbar}{\ybar} =\ax{ax6} 
               \ybar$ \\
              $\xbar =\ax{ax7} \app{\zbar}{\ybar} = \app{\zbar}{\xbar} =\lm{Lzxz} \zbar$

        \item $\BLOCK{\app{\ybar}{\xbar} = \app{\zbar}{\xbar}} \IMPLIES \BLOCK{\ybar = \zbar}$: \\
              $\xbar =\ax{ax7} \app{\zbar}{\ybar} =\lm{Lzxz} \app{\app{\zbar}{\xbar}}{\ybar} = 
               \app{\app{\ybar}{\xbar}}{\ybar} =\ax{AR} \app{\ybar}{\app{\xbar}{\ybar}} =\ax{ax6} \app{\ybar}{\ybar}$ \\
              $\ybar =\ax{ax6} \app{\xbar}{\ybar} = \app{\app{\ybar}{\ybar}}{\ybar} =\ax{AR} \app{\ybar}{\app{\ybar}{\ybar}} = 
               \app{\ybar}{\xbar} = \app{\zbar}{\xbar} =\lm{Lzxz} \zbar$

        \item $\BLOCK{\app{\xbar}{\ybar} = \app{\ybar}{\ybar}} \IMPLIES \BLOCK{\xbar = \ybar}$: \\
              $\xbar =\ax{ax7} \app{\zbar}{\ybar} =\ax{ax6} \app{\zbar}{\app{\xbar}{\ybar}} = 
               \app{\zbar}{\app{\ybar}{\ybar}} =\ax{AR} \app{\app{\zbar}{\ybar}}{\ybar} =\ax{ax7} \app{\xbar}{\ybar} =\ax{ax6} 
               \ybar$

        \item $\BLOCK{\app{\xbar}{\ybar} = \app{\zbar}{\ybar}} \IMPLIES \BLOCK{\xbar = \zbar}$: \\
              $\ybar =\ax{ax6} \app{\xbar}{\ybar} = \app{\zbar}{\ybar} =\ax{ax7} \xbar$ \\
              $\xbar =\ax{ax7} \app{\zbar}{\ybar} = \app{\zbar}{\xbar} =\lm{Lzxz} \zbar$

        \item $\BLOCK{\app{\ybar}{\ybar} = \app{\zbar}{\ybar}} \IMPLIES \BLOCK{\ybar = \zbar}$: \\
              $\ybar =\ax{ax6} \app{\xbar}{\ybar} =\ax{ax7} \app{\app{\zbar}{\ybar}}{\ybar} =\ax{AR} 
               \app{\zbar}{\app{\ybar}{\ybar}} = \app{\zbar}{\app{\zbar}{\ybar}} =\ax{ax7} \app{\zbar}{\xbar} =\lm{Lzxz} \zbar$

        \item $\BLOCK{\app{\xbar}{\zbar} = \app{\ybar}{\zbar}} \IMPLIES \BLOCK{\xbar = \ybar}$: \\
              $\app{\xbar}{\zbar} =\ax{ax7} \app{\app{\zbar}{\ybar}}{\zbar} =\ax{AR} \app{\zbar}{\app{\ybar}{\zbar}} = 
               \app{\zbar}{\app{\xbar}{\zbar}} =\ax{AR} \app{\app{\zbar}{\xbar}}{\zbar} =\lm{Lzxz} \app{\zbar}{\zbar}$ \\
              $\zbar =\lm{Lzxz} \app{\zbar}{\xbar} =\ax{ax7} \app{\zbar}{\app{\zbar}{\ybar}} =\ax{AR} 
               \app{\app{\zbar}{\zbar}}{\ybar} = \app{\app{\xbar}{\zbar}}{\ybar} =\ax{AR} 
               \app{\xbar}{\app{\zbar}{\ybar}} =\ax{ax7}$ \\
              $\app{\xbar}{\xbar}$ \\
              $\zbar =\lm{Lzxz} \app{\zbar}{\xbar} = \app{\app{\xbar}{\xbar}}{\xbar} =\ax{AR} \app{\xbar}{\app{\xbar}{\xbar}} = 
               \app{\xbar}{\zbar}$ \\
              $\xbar =\ax{ax7} \app{\zbar}{\ybar} = \app{\app{\xbar}{\zbar}}{\ybar} =\ax{AR} 
               \app{\xbar}{\app{\zbar}{\ybar}} =\ax{ax7} \app{\xbar}{\xbar} = \zbar$ \\
              $\xbar =\ax{ax7} \app{\zbar}{\ybar} = \app{\xbar}{\ybar} =\ax{ax6} \ybar$

        \item $\BLOCK{\app{\xbar}{\zbar} = \app{\zbar}{\zbar}} \IMPLIES \BLOCK{\xbar = \zbar}$: \\
              $\zbar =\lm{Lzxz} \app{\zbar}{\xbar} =\ax{ax7} \app{\zbar}{\app{\zbar}{\ybar}} =\ax{AR} 
               \app{\app{\zbar}{\zbar}}{\ybar} = \app{\app{\xbar}{\zbar}}{\ybar} =\ax{AR} 
               \app{\xbar}{\app{\zbar}{\ybar}} =\ax{ax7}$ \\
              $\app{\xbar}{\xbar}$ \\
              $\xbar =\ax{ax7} \app{\zbar}{\ybar} = \app{\app{\xbar}{\xbar}}{\ybar} =\ax{AR} 
               \app{\xbar}{\app{\xbar}{\ybar}} =\ax{ax6} \app{\xbar}{\ybar} =\ax{ax6} \ybar$ \\
              $\xbar =\ax{ax7} \app{\zbar}{\ybar} = \app{\zbar}{\xbar} =\lm{Lzxz} \zbar$

        \item $\BLOCK{\app{\ybar}{\zbar} = \app{\zbar}{\zbar}} \IMPLIES \BLOCK{\ybar = \zbar}$: \\
              $\xbar =\ax{ax7} \app{\zbar}{\ybar} =\ax{ax6} \app{\zbar}{\app{\xbar}{\ybar}} =\ax{AR} 
               \app{\app{\zbar}{\xbar}}{\ybar} =\ax{ax7} \app{\app{\zbar}{\app{\zbar}{\ybar}}}{\ybar} =\ax{AR}$ \\
              $\app{\app{\app{\zbar}{\zbar}}{\ybar}}{\ybar} = \app{\app{\app{\ybar}{\zbar}}{\ybar}}{\ybar} =\ax{AR} 
               \app{\app{\ybar}{\app{\zbar}{\ybar}}}{\ybar} =\ax{ax7} \app{\app{\ybar}{\xbar}}{\ybar} =\ax{AR}$ \\
              $\app{\ybar}{\app{\xbar}{\ybar}} =\ax{ax6} \app{\ybar}{\ybar}$ \\
              $\ybar =\ax{ax6} \app{\xbar}{\ybar} =\ax{ax7} \app{\app{\zbar}{\ybar}}{\ybar} =\ax{AR} 
                \app{\zbar}{\app{\ybar}{\ybar}} =\app{\zbar}{\xbar} =\lm{Lzxz} \zbar$
    \end{enumerate}
\end{proofindent}
\end{proof}

\begin{definition}\label{BR}
    \setlength{\tabcolsep}{2mm}
    \begin{tabular}[t]{ll}
        \multicolumn{2}{l}{We will call $r$ a bijective relation iff:} \\
        (1) & $\FORALL r, p, x: \BLOCK{\app{r}{x} = \app{p}{x}} \IMPLIES \BLOCK{r = p}$, \\
        (2) & $\FORALL r, x, y: \BLOCK{\app{r}{x} = \app{r}{y}} \IMPLIES \BLOCK{x = y}$.
    \end{tabular}
\end{definition}

Thus, by a \emph{bijective relation} we intend a  relation which obeys the extensional rule as well as the substitution 
property of equality.

\begin{proposition}\label{pr01}
    $\xbar$, $\ybar$, $\zbar$ are bijective relations.
\end{proposition}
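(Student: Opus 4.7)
The proposal is to split the verification into the two clauses of Definition \ref{BR} and to handle each separately. Clause~(1) is, up to renaming of bound variables, identical with the statement of Proposition \ref{ER}, which has already been proved to hold for every triple in \emph{DIT}; in particular it holds when the outer relation is any of $\xbar$, $\ybar$, $\zbar$, so this clause demands no further argument.

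The real content is clause~(2): for each $r \in \{\xbar, \ybar, \zbar\}$ one must show that $\app{r}{x} = \app{r}{y}$ forces $x = y$. I would proceed by case analysis over the nine combinations of a choice of $r \in \{\xbar, \ybar, \zbar\}$ with an unordered pair $\{x, y\} \subseteq \{\xbar, \ybar, \zbar\}$ of distinct elements, assuming the hypothesis in each case and aiming for a contradiction with one of the \emph{DIT} distinctness conditions $\xbar \neq \ybar$, $\ybar \neq \zbar$, $\zbar \neq \xbar$. Several cases collapse immediately via the known identities $\app{\xbar}{\ybar} = \ybar$ (Axiom \ref{ax6}), $\app{\zbar}{\ybar} = \xbar$ (Axiom \ref{ax7}), and $\app{\zbar}{\xbar} = \zbar$ (Lemma \ref{Lzxz}); for instance $\app{\zbar}{\xbar} = \app{\zbar}{\ybar}$ yields $\zbar = \xbar$ at once.

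The main obstacle lies in the cases where neither side simplifies directly through these axioms --- prototypical examples being $\app{\xbar}{\xbar} = \app{\xbar}{\ybar}$, $\app{\ybar}{\xbar} = \app{\ybar}{\ybar}$, or $\app{\zbar}{\xbar} = \app{\zbar}{\zbar}$. Here I would mimic the strategy used inside the proof of Proposition \ref{ER}: pre- or post-apply a well-chosen element of $\{\xbar, \ybar, \zbar\}$ to both sides of the hypothesis, reassociate with Axiom \ref{AR}, and then deploy Axioms \ref{ax6}, \ref{ax7} and Lemma \ref{Lzxz} along the resulting chain until an equation between two distinct basic elements emerges and contradicts \emph{DIT}. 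The difficulty is purely combinatorial --- finding the right sequence of re-associations and applications for each of the recalcitrant cases --- and I do not expect any new structural idea to be needed beyond those already used in Proposition \ref{ER}.
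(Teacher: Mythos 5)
Your plan coincides with the paper's own proof: clause~(1) of Definition~\ref{BR} is discharged by citing Proposition~\ref{ER}, and clause~(2) is verified by exactly the same nine-case analysis over $r,x,y\in\{\xbar,\ybar,\zbar\}$, each case settled by an equational chain obtained by applying a suitable element, re-associating via Axiom~\ref{AR}, and reducing with Axioms~\ref{ax6},~\ref{ax7} and Lemma~\ref{Lzxz}. The only cosmetic difference is that the paper drives each chain all the way to the required equality $x=y$ instead of stopping at a clash with the distinctness conditions of \emph{DIT}; the computations involved are the same.
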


\begin{proof}
\parbox[t]{\proofwidth\linewidth}{
    Aspect~(1) of Definition~\ref{BR} is assured by Proposition~\ref{ER}. \\
    We prove aspect~(2) for every possible combination of $r$, $p$ and $x$:}
\begin{proofindent}
    \begin{enumerate}
    \setlength{\itemsep}{2mm}
        \item $\BLOCK{\app{\xbar}{\xbar} = \app{\xbar}{\ybar}} \IMPLIES \BLOCK{\xbar = \ybar}$: \\
              $\xbar =\ax{ax7} \app{\zbar}{\ybar} =\ax{ax6} \app{\zbar}{\app{\xbar}{\ybar}} = 
               \app{\zbar}{\app{\xbar}{\xbar}} =\ax{AR} \app{\app{\zbar}{\xbar}}{\xbar} =\lm{Lzxz} \app{\zbar}{\xbar} =\lm{Lzxz} 
               \zbar$ \\
              $\xbar =\ax{ax7} \app{\zbar}{\ybar} = \app{\xbar}{\ybar} =\ax{ax6} \ybar$

        \item $\BLOCK{\app{\xbar}{\xbar} = \app{\xbar}{\zbar}} \IMPLIES \BLOCK{\xbar = \zbar}$: \\
              $\ybar =\ax{ax6} \app{\xbar}{\ybar} =\ax{ax6} \app{\xbar}{\app{\xbar}{\ybar}} =\ax{AR} 
               \app{\app{\xbar}{\xbar}}{\ybar} = \app{\app{\xbar}{\zbar}}{\ybar} =\ax{AR} 
               \app{\xbar}{\app{\zbar}{\ybar}} =\ax{ax7} \app{\xbar}{\xbar}$ \\
              $\xbar =\ax{ax7} \app{\zbar}{\ybar} = \app{\zbar}{\app{\xbar}{\xbar}} =\ax{AR} 
               \app{\app{\zbar}{\xbar}}{\xbar} =\lm{Lzxz} \app{\zbar}{\xbar} =\lm{Lzxz} \zbar$

        \item $\BLOCK{\app{\xbar}{\ybar} = \app{\xbar}{\zbar}} \IMPLIES \BLOCK{\ybar = \zbar}$: \\
              $\xbar =\ax{ax7} \app{\zbar}{\ybar} =\ax{ax6} \app{\zbar}{\app{\xbar}{\ybar}} = 
               \app{\zbar}{\app{\xbar}{\zbar}} =\ax{AR} \app{\app{\zbar}{\xbar}}{\zbar} =\lm{Lzxz} \app{\zbar}{\zbar}$ \\
              $\ybar =\ax{ax6} \app{\xbar}{\ybar} = \app{\app{\zbar}{\zbar}}{\ybar} =\ax{AR} 
               \app{\zbar}{\app{\zbar}{\ybar}} =\ax{ax7} \app{\zbar}{\xbar} =\lm{Lzxz} \zbar$

        \item $\BLOCK{\app{\ybar}{\xbar} = \app{\ybar}{\ybar}} \IMPLIES \BLOCK{\xbar = \ybar}$: \\
              $\ybar =\ax{ax6} \app{\xbar}{\ybar} =\ax{ax7} \app{\app{\zbar}{\ybar}}{\ybar} =\ax{AR} 
               \app{\zbar}{\app{\ybar}{\ybar}} = \app{\zbar}{\app{\ybar}{\xbar}} =\ax{AR} 
               \app{\app{\zbar}{\ybar}}{\xbar} =\ax{ax7}$ \\
              $\app{\xbar}{\xbar}$ \\
              $\xbar =\ax{ax7} \app{\zbar}{\ybar} = \app{\zbar}{\app{\xbar}{\xbar}} =\ax{AR} 
               \app{\app{\zbar}{\xbar}}{\xbar} =\lm{Lzxz} \app{\zbar}{\xbar} =\lm{Lzxz} \zbar$ \\
              $\xbar =\ax{ax7} \app{\zbar}{\ybar} = \app{\xbar}{\ybar} =\ax{ax6} \ybar$

        \item $\BLOCK{\app{\ybar}{\xbar} = \app{\ybar}{\zbar}} \IMPLIES \BLOCK{\xbar = \zbar}$: \\
              $\app{\ybar}{\ybar} =\ax{ax6} \app{\ybar}{\app{\xbar}{\ybar}} =\ax{AR} \app{\app{\ybar}{\xbar}}{\ybar} =
               \app{\app{\ybar}{\zbar}}{\ybar} =\ax{AR} \app{\ybar}{\app{\zbar}{\ybar}} =\ax{ax7} \app{\ybar}{\xbar}$ \\
              $\ybar =\ax{ax6} \app{\xbar}{\ybar} =\ax{ax7} \app{\app{\zbar}{\ybar}}{\ybar} =\ax{AR} 
               \app{\zbar}{\app{\ybar}{\ybar}} = \app{\zbar}{\app{\ybar}{\xbar}} =\ax{AR} 
               \app{\app{\zbar}{\ybar}}{\xbar} =\ax{ax7}$ \\
              $\app{\xbar}{\xbar}$ \\
              $\xbar =\ax{ax6} \app{\zbar}{\ybar} = \app{\zbar}{\app{\xbar}{\xbar}} =\ax{AR} 
               \app{\app{\zbar}{\xbar}}{\xbar} =\lm{Lzxz} \app{\zbar}{\xbar} =\lm{Lzxz} \zbar$

        \item $\BLOCK{\app{\ybar}{\ybar} = \app{\ybar}{\zbar}} \IMPLIES \BLOCK{\ybar = \zbar}$: \\
              $\ybar =\ax{ax6} \app{\xbar}{\ybar} =\ax{ax7} \app{\app{\zbar}{\ybar}}{\ybar} =\ax{AR} 
               \app{\zbar}{\app{\ybar}{\ybar}} = \app{\zbar}{\app{\ybar}{\zbar}} =\ax{AR} 
               \app{\app{\zbar}{\ybar}}{\zbar} =\ax{ax7} \app{\xbar}{\zbar}$ \\
              $\xbar =\ax{ax7} \app{\zbar}{\ybar} = \app{\zbar}{\app{\xbar}{\zbar}} =\ax{AR} 
               \app{\app{\zbar}{\xbar}}{\zbar} =\lm{Lzxz} \app{\zbar}{\zbar}$ \\
              $\ybar =\ax{ax6} \app{\xbar}{\ybar} = \app{\app{\zbar}{\zbar}}{\ybar} =\ax{AR} 
               \app{\zbar}{\app{\zbar}{\ybar}} =\ax{ax7} \app{\zbar}{\xbar} =\lm{Lzxz} \zbar$

        \item $\BLOCK{\app{\zbar}{\xbar} = \app{\zbar}{\ybar}} \IMPLIES \BLOCK{\xbar = \ybar}$: \\
              $\xbar =\ax{ax7} \app{\zbar}{\ybar} =\lm{Lzxz} \app{\app{\zbar}{\xbar}}{\ybar} = 
               \app{\app{\zbar}{\ybar}}{\ybar} =\ax{ax7} \app{\xbar}{\ybar} =\ax{ax6} \ybar$

        \item $\BLOCK{\app{\zbar}{\xbar} = \app{\zbar}{\zbar}} \IMPLIES \BLOCK{\xbar = \zbar}$: \\
              $\xbar =\ax{ax7} \app{\zbar}{\ybar} =\lm{Lzxz} \app{\app{\zbar}{\xbar}}{\ybar} = 
               \app{\app{\zbar}{\zbar}}{\ybar} =\ax{AR} \app{\zbar}{\app{\zbar}{\ybar}} =\ax{ax7} \app{\zbar}{\xbar} =\lm{Lzxz} 
               \zbar$

        \item $\BLOCK{\app{\zbar}{\ybar} = \app{\zbar}{\zbar}} \IMPLIES \BLOCK{\ybar = \zbar}$: \\
              $\ybar =\ax{ax6} \app{\xbar}{\ybar} =\ax{ax7} \app{\app{\zbar}{\ybar}}{\ybar} = 
               \app{\app{\zbar}{\zbar}}{\ybar} =\ax{AR} \app{\zbar}{\app{\zbar}{\ybar}} =\ax{ax7} \app{\zbar}{\xbar} =\lm{Lzxz} 
               \zbar$
    \end{enumerate}
\end{proofindent}
\end{proof}

Note that $\xbar$, $\ybar$, $\zbar$ must be distinct.
If they were not distinct, our model would either collapse to a simple formula similar to a classical definition of identity, 
$\app{\xbar}{\xbar} = \xbar$, or we would separate $\xbar$ from $\ybar$:

\begin{enumerate}
\setlength{\itemsep}{2mm}
    \item If $\xbar = \ybar$, then 
          $\BLOCK{\app{\xbar}{\xbar} =\ax{ax6} \xbar}$ and $\BLOCK{\app{\zbar}{\xbar} =\ax{ax7} \xbar}$; \\
          with $\BLOCK{\app{\xbar}{\xbar} = \app{\zbar}{\xbar}} \IMPLIES\pr{ER} \BLOCK{\xbar = \zbar}$ we get
          $\xbar = \ybar = \zbar$ and $\app{\xbar}{\xbar} = \xbar$.

    \item If $\xbar = \zbar$, then 
          $\BLOCK{\app{\xbar}{\ybar} =\ax{ax7} \xbar}$ and $\BLOCK{\app{\xbar}{\ybar} =\ax{ax6} \ybar}$, which implies
          $\BLOCK{\xbar = \ybar}$; \\
          again we get $\xbar = \ybar = \zbar$ and $\app{\xbar}{\xbar} = \xbar$.

    \item If $\ybar = \zbar$, then 
          $\BLOCK{\app{\xbar}{\ybar} =\ax{ax6} \ybar}$ and $\BLOCK{\app{\ybar}{\ybar} =\ax{ax7} \xbar}$, we get \\
          $\xbar =\ax{ax7} \app{\ybar}{\ybar} =\ax{ax6} \app{\app{\xbar}{\ybar}}{\ybar} =\ax{AR} 
           \app{\xbar}{\app{\ybar}{\ybar}} =\ax{ax7} \app{\xbar}{\xbar}$ and \\
          $\ybar =\ax{ax6} \app{\xbar}{\ybar} =\ax{ax7} \app{\app{\ybar}{\ybar}}{\ybar} =\ax{AR} \appp{\ybar}{\ybar}{\ybar}$, \\
          which leads to the eternal progression \\
          $\ybar = \appp{\ybar}{\ybar}{\ybar} =\ax{ax6} \appp{\appp{\ybar}{\ybar}{\ybar}}{\ybar}{\ybar} =\ax{AR} 
           \appppp{\ybar}{\ybar}{\ybar}{\ybar}{\ybar} = \appppppp{\ybar}{\ybar}{\ybar}{\ybar}{\ybar}{\ybar}{\ybar} \dots$ and \\
          $\xbar = \app{\ybar}{\ybar} =\ax{ax6} \app{\app{\xbar}{\ybar}}{\ybar} =\ax{AR} \appp{\xbar}{\ybar}{\ybar} =\ax{ax7} 
           \appp{\app{\ybar}{\ybar}}{\ybar}{\ybar} =\ax{AR} \apppp{\ybar}{\ybar}{\ybar}{\ybar} = \dots$
\end{enumerate}

Moreover, $\xbar$, $\ybar$, $\zbar$ are considered to be \textquotedblleft{}co-essential\textquotedblright{}, in the sense that
two relations alone, $\xbar$ and $\ybar$ or $\xbar$ and $\zbar$ or $\ybar$ and $\zbar$, cannot define \emph{DIT}.

Finally, the bijective property of $\xbar$, $\ybar$, $\zbar$ assures their dynamic character.
\emph{DIT} $\BLOCK{\xbar, \ybar, \zbar}$ defined in terms of $\xbar$, $\ybar$, $\zbar$ is fully dynamic.
For this reason we have called our model \emph{Dynamic Identity Triple}.

\subsection{Dynamic Identity Triple with Symmetry (\emph{DITS})}%
\label{se:RelationalCalculus:DITS}

Let us slightly modify \emph{DIT} and add a symmetry condition on $\zbar$ $\BLOCK{\app{\ybar}{\zbar} = \app{\zbar}{\ybar}}$.

\begin{lemma}\label{Lxzz}
    $\BLOCK{\app{\ybar}{\zbar} = \app{\zbar}{\ybar}} \IMPLIES \BLOCK{\app{\xbar}{\zbar} = \zbar}$
\end{lemma}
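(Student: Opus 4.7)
The plan is to rewrite $\app{\xbar}{\zbar}$ by expanding the left-hand $\xbar$ via Axiom~\ref{ax7}, so that the associativity rule can expose the pattern $\app{\ybar}{\zbar}$, to which the symmetry hypothesis applies. Concretely, I would start from
\[
    \app{\xbar}{\zbar} = \app{\app{\zbar}{\ybar}}{\zbar}
\]
using Axiom~\ref{ax7}, then use Axiom~\ref{AR} to regroup this as $\app{\zbar}{\app{\ybar}{\zbar}}$.

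Next I would invoke the hypothesis $\app{\ybar}{\zbar} = \app{\zbar}{\ybar}$ to rewrite the inner term, obtaining $\app{\zbar}{\app{\zbar}{\ybar}}$, and then apply Axiom~\ref{ax7} once more to collapse the inner $\app{\zbar}{\ybar}$ back to $\xbar$. This leaves $\app{\zbar}{\xbar}$, and Lemma~\ref{Lzxz} finishes the chain by giving $\zbar$.

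I do not expect any real obstacle here: the proof is a four-step rewrite that alternates Axiom~\ref{ax7} with the Association Rule and uses the symmetry hypothesis exactly once, closing with Lemma~\ref{Lzxz}. The only place where one has to think is the very first move, namely recognizing that one should expand $\xbar$ on the left rather than try to manipulate $\zbar$ on the right, since it is precisely that expansion which, after associativity, produces an occurrence of $\app{\ybar}{\zbar}$ on which the new hypothesis can act.
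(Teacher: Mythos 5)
Your proposal is correct and follows exactly the same chain as the paper's own proof: expand $\xbar$ to $\app{\zbar}{\ybar}$ via Axiom~\ref{ax7}, regroup with Axiom~\ref{AR}, apply the symmetry hypothesis once, collapse with Axiom~\ref{ax7}, and finish with Lemma~\ref{Lzxz}. Nothing to add.
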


\begin{proof}
    $\app{\xbar}{\zbar} =\ax{ax7} \app{\app{\zbar}{\ybar}}{\zbar} =\ax{AR} \app{\zbar}{\app{\ybar}{\zbar}} = 
     \app{\zbar}{\app{\zbar}{\ybar}} =\ax{ax7} \app{\zbar}{\xbar} =\lm{Lzxz} \zbar$
\end{proof}

\begin{lemma}\label{Lxyyx}
    $\BLOCK{\app{\ybar}{\zbar} = \app{\zbar}{\ybar}} \IMPLIES \BLOCK{\app{\xbar}{\ybar} = \app{\ybar}{\xbar}}$
\end{lemma}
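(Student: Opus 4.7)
The plan is to compute $\app{\ybar}{\xbar}$ directly and show it equals $\ybar$, since by Axiom~\ref{ax6} we already have $\app{\xbar}{\ybar} = \ybar$, so the two expressions collapse to the same element. The computation should be short and look structurally identical to the proof of Lemma~\ref{Lxzz}, only with the roles of the ``outer'' $\xbar$ and the extra $\ybar$ factor rearranged.

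Concretely, I would first rewrite the troublesome $\xbar$ on the right of $\app{\ybar}{\xbar}$ using Axiom~\ref{ax7}, replacing it by $\app{\zbar}{\ybar}$. Then a single application of the Association Rule (Axiom~\ref{AR}) shifts the parentheses so that the product $\app{\ybar}{\zbar}$ appears as a subterm. At this point the symmetry hypothesis $\app{\ybar}{\zbar} = \app{\zbar}{\ybar}$ can be invoked to swap this subterm. What remains is $\app{\app{\zbar}{\ybar}}{\ybar}$, which collapses to $\app{\xbar}{\ybar}$ by Axiom~\ref{ax7} and then to $\ybar$ by Axiom~\ref{ax6}. Finally I close the chain by applying Axiom~\ref{ax6} once more, in the other direction, to identify $\ybar$ with $\app{\xbar}{\ybar}$.

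There is no real obstacle here: the argument is a five- or six-step rewriting calculation that mirrors the proof of the preceding lemma almost verbatim. The only point to be careful about is the direction of the associativity step, since the symmetry hypothesis only helps once the factor $\app{\ybar}{\zbar}$ has been isolated as a genuine subterm rather than being split across two applications. Once the bracketing is right, every subsequent rewrite is forced.
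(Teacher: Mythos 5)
Your proposal is correct and is essentially the paper's own proof read in the opposite direction: the same chain of rewrites (Axiom~\ref{ax7}, one associativity step isolating $\app{\ybar}{\zbar}$, the symmetry hypothesis, Axiom~\ref{ax7} again) carries $\app{\ybar}{\xbar}$ to $\app{\xbar}{\ybar}$, exactly as the paper carries $\app{\xbar}{\ybar}$ to $\app{\ybar}{\xbar}$. The final collapse of both sides to $\ybar$ via Axiom~\ref{ax6} is harmless but redundant, since the chain already ends at $\app{\xbar}{\ybar}$.
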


\begin{proof}
    $\app{\xbar}{\ybar} =\ax{ax7} \app{\app{\zbar}{\ybar}}{\ybar} = \app{\app{\ybar}{\zbar}}{\ybar} =\ax{AR} 
     \app{\ybar}{\app{\zbar}{\ybar}} =\ax{ax7} \app{\ybar}{\xbar}$
\end{proof}

\begin{lemma}\label{Lxxx}
    $\BLOCK{\app{\ybar}{\zbar} = \app{\zbar}{\ybar}} \IMPLIES \BLOCK{\app{\xbar}{\xbar} = \xbar}$
\end{lemma}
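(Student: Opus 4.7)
The plan is to chain the previously established consequence of symmetry, namely Lemma~\ref{Lxzz} giving $\app{\xbar}{\zbar} = \zbar$, with Axiom~\ref{ax7} in a short rewrite that pushes $\xbar$ inside an application to itself.

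First I would start from the right-hand side $\xbar$ and rewrite it via Axiom~\ref{ax7} as $\app{\zbar}{\ybar}$. The crucial move is then to replace the leading $\zbar$ by $\app{\xbar}{\zbar}$, which is justified precisely by Lemma~\ref{Lxzz} (and hence needs the symmetry hypothesis $\app{\ybar}{\zbar} = \app{\zbar}{\ybar}$). This yields $\app{\app{\xbar}{\zbar}}{\ybar}$.

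Next I would reassociate using Axiom~\ref{AR} to obtain $\app{\xbar}{\app{\zbar}{\ybar}}$, and finally apply Axiom~\ref{ax7} once more to the inner subterm $\app{\zbar}{\ybar}$, collapsing it to $\xbar$. The result is $\app{\xbar}{\xbar}$, so the whole chain reads
\[
  \xbar \;=\; \app{\zbar}{\ybar} \;=\; \app{\app{\xbar}{\zbar}}{\ybar} \;=\; \app{\xbar}{\app{\zbar}{\ybar}} \;=\; \app{\xbar}{\xbar},
\]
which is the desired identity (after swapping sides by Axiom~\ref{DR}).

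There is essentially no obstacle here: the work was already done in Lemma~\ref{Lxzz}, which is the only place the symmetry assumption is actually consumed. The remaining steps are a single use of associativity sandwiched between two applications of Axiom~\ref{ax7}. The only minor care needed is to apply Lemma~\ref{Lxzz} in the direction $\zbar \rightsquigarrow \app{\xbar}{\zbar}$ rather than the reverse, so that associativity subsequently exposes a fresh copy of $\app{\zbar}{\ybar}$ on which Axiom~\ref{ax7} can fire again.
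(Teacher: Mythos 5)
Your chain $\xbar = \app{\zbar}{\ybar} = \app{\app{\xbar}{\zbar}}{\ybar} = \app{\xbar}{\app{\zbar}{\ybar}} = \app{\xbar}{\xbar}$ is exactly the paper's own proof of Lemma~\ref{Lxxx}: Axiom~\ref{ax7}, then Lemma~\ref{Lxzz}, then Axiom~\ref{AR}, then Axiom~\ref{ax7} again. The proposal is correct and takes essentially the same approach as the paper.
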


\begin{proof}
    $\xbar =\ax{ax7} \app{\zbar}{\ybar} =\lm{Lxzz} \app{\app{\xbar}{\zbar}}{\ybar} =\ax{AR} 
     \app{\xbar}{\app{\zbar}{\ybar}} =\ax{ax7} \app{\xbar}{\xbar}$
\end{proof}

Lemma~\ref{Lxxx} provides a useful extension to Axiom~\ref{ax6}.

Because the symmetry condition $\BLOCK{\app{\ybar}{\zbar} = \app{\zbar}{\ybar}}$ is not a theorem of \emph{DIT}, we define a 
Dynamic Identity Triple with Symmetry (\emph{DITS}) by adding another axiom:

\begin{axiom}\label{ax7a}
    $\app{\ybar}{\zbar} = \app{\zbar}{\ybar}$
\end{axiom}

\begin{definition}
    A Dynamic Identity Triple with Symmetry \emph{DITS} is a model satisfying Axioms~\ref{ax6},~\ref{ax7} and~\ref{ax7a}.
\end{definition}

\subsection{Dynamic Generative System (\emph{DGS})}%
\label{se:RelationalCalculus:DGS}

We can \textquotedblleft{}open\textquotedblright{} \emph{DIT} and assume that Axioms~\ref{ax6} and~\ref{ax7} are true for any 
variable $y$.
Using classical quantification rules, we obtain a modified model, which we will call \emph{Dynamic Generative System} -- 
\emph{DGS} $= \BLOCK{\xbar, y, z}$, with the following axioms:

\begin{axiom}\label{ax8}
    $\FORALL y: \app{\xbar}{y} = y$
\end{axiom}

\begin{axiom}\label{ax9}
    $\FORALL y \EXISTS z: \app{z}{y} = \xbar$
\end{axiom}

Analogously to Lemma~\ref{Lzxz}, we obtain:

\begin{lemma}\label{Lrxr}
    $\FORALL r: \app{r}{\xbar} = r$
\end{lemma}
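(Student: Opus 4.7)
The plan is to reproduce the proof of Lemma~\ref{Lzxz} almost verbatim, exploiting that Axiom~\ref{ax8} is the universally-quantified generalization of Axiom~\ref{ax6}: everything the original argument did with the specific pair $(\zbar, \ybar)$ still works with the generic pair $(r, y)$. In effect, $r$ plays the role of $\zbar$ and an arbitrary $y$ plays the role of $\ybar$.

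Concretely, I would fix an arbitrary $r$ and argue by contradiction. Assume $\app{r}{\xbar} \neq r$. Then the same substitution step used in Lemma~\ref{Lzxz} gives, for any $y$, that $\app{r}{y} \neq \app{\app{r}{\xbar}}{y}$. On the other hand, Axiom~\ref{AR} followed by Axiom~\ref{ax8} yields
\[
\app{\app{r}{\xbar}}{y} \;=\; \app{r}{\app{\xbar}{y}} \;=\; \app{r}{y},
\]
producing the contradiction $\app{r}{y} \neq \app{r}{y}$.

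I do not expect any genuine obstacle. In particular, Axiom~\ref{ax9} is not needed at all; only the left-identity behaviour of $\xbar$ from Axiom~\ref{ax8} and the associativity of Axiom~\ref{AR} are used. The one methodological point worth flagging is that the step ``$\app{r}{\xbar} \neq r$ therefore $\app{\app{r}{\xbar}}{y} \neq \app{r}{y}$'' relies on the same implicit substitution principle already invoked in the proof of Lemma~\ref{Lzxz}, so it introduces no meta-principle beyond what the paper has been using. The generalization from $\zbar$ to arbitrary $r$ is therefore painless, which is precisely why the author signals the result as ``analogous''.
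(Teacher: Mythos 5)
Your proof is correct and essentially identical to the paper's: the paper also derives the contradiction $\app{r}{y} \neq \app{\app{r}{\xbar}}{y} = \app{r}{\app{\xbar}{y}} = \app{r}{y}$ from Axioms~\ref{AR} and~\ref{ax8}, mirroring Lemma~\ref{Lzxz}. If anything, your formulation (fix an arbitrary $r$ and assume $\app{r}{\xbar} \neq r$) states the reductio hypothesis more cleanly than the paper's universally quantified version.
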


\begin{proof}
\parbox[t]{\proofwidth\linewidth}{
    Assume $\FORALL r: \app{r}{\xbar} \neq r$: \\
    $\FORALL r, y: \app{r}{y} \neq \app{\app{r}{\xbar}}{y} =\ax{AR} \app{r}{\app{\xbar}{y}} =\ax{ax8} \app{r}{y}$ --- 
    contradiction}
\end{proof}

Note that Lemma~\ref{Lrxr} does not exclude the possibility of $r = \xbar$.

Hence, we get $\app{\xbar}{\xbar} = \xbar$ by Axioms~\ref{ax8} and~\ref{ax9} or by Axioms~\ref{ax6},~\ref{ax7} and the 
symmetry condition (Axiom~\ref{ax7a}).

\subsection{Dynamic Generative System with Symmetry (\emph{DGSS})}%
\label{se:RelationalCalculus:DGSS}

As in the case of \emph{DIT}, we can add a similar symmetry condition to \emph{DGS}.

\begin{axiom}\label{ax9a}
    $\FORALL y \EXISTS z: \app{z}{y} = \app{y}{z} = \xbar$
\end{axiom}

\begin{definition}
    A Dynamic Generative System with Symmetry (\emph{DGSS}) is a model that satisfies Axioms~\ref{ax8},~\ref{ax9} and~\ref{ax9a}.
\end{definition}

\begin{lemma}\label{lm2a}
    $\FORALL x, y: \BLOCK{x = y} \IMPLIES \EXISTS s, t: \BLOCK{\app{s}{x} = \app{t}{y} = \xbar} \AND \BLOCK{s = t}$
\end{lemma}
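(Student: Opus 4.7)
The statement asks, given $x = y$, to produce witnesses $s, t$ with $\app{s}{x} = \app{t}{y} = \xbar$ and $s = t$. My plan is to take $s$ and $t$ to be the same element, namely the witness supplied by Axiom~\ref{ax9} (or \ref{ax9a}) applied to $y$. No separate existential invocation for $x$ is needed, because $x$ and $y$ are already identified by the hypothesis.

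Concretely, I would proceed as follows. First, instantiate Axiom~\ref{ax9} at $y$ to obtain a $z$ with $\app{z}{y} = \xbar$. Second, set $s := z$ and $t := z$; this immediately gives $s = t$ by reflexivity (Axiom~\ref{DR}) and $\app{t}{y} = \app{z}{y} = \xbar$ by construction. Third, use the hypothesis $x = y$ to replace $y$ by $x$ in $\app{z}{y}$, yielding $\app{s}{x} = \app{z}{x} = \app{z}{y} = \xbar$, via symmetry and transitivity from Axiom~\ref{DR}. The chain of equalities closes the proof.

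The only subtle point is the substitution step $\app{z}{x} = \app{z}{y}$ from $x = y$. Strictly speaking, the primitive deduction rules in Axiom~\ref{DR} provide only reflexivity, symmetry, and transitivity, not the congruence of application with respect to equality. However, this congruence is the tacit ambient logical rule governing $=$ in the system, and it is precisely the direction opposite to aspect~(2) of Definition~\ref{BR} (which is the nontrivial injectivity statement). So I do not expect any obstacle here beyond acknowledging that substitution of equal terms inside $\apl{~}$ is admissible. If one wishes to avoid even this, the entire proof reduces to noting that $x$ and $y$ denote the very same object once $x = y$ is assumed, so the single witness $z$ furnished by Axiom~\ref{ax9} serves both roles simultaneously.
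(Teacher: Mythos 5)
Your shortcut (one witness $z$ from Axiom~\ref{ax9} at $y$, then $s:=t:=z$) does verify the literal existential statement, but only by invoking the congruence step $\BLOCK{x=y}\IMPLIES\BLOCK{\app{z}{x}=\app{z}{y}}$, and in this paper that step is not an ambient rule: it is exactly part~(2) of Proposition~\ref{pr2e}, which is proved \emph{later} and whose proof cites Lemma~\ref{lm2a}. Axiom~\ref{DR} supplies only reflexivity, symmetry and transitivity, so under a strict reading your argument is circular at precisely the point you flag as ``subtle''. You cannot discharge it by saying $x$ and $y$ ``denote the very same object'', because whether equality may be pushed inside the application operator is one of the things the paper is setting out to prove in \emph{DGSS}.

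There is also a substantive mismatch with what the lemma is really for. The paper's proof takes \emph{independent} witnesses, $s$ with $\app{s}{x}=\xbar$ and $t$ with $\app{t}{y}=\xbar$, and then shows they must coincide by the computation $s=\app{s}{\xbar}=\app{s}{\app{t}{y}}=\app{s}{\app{y}{t}}=\app{\app{s}{y}}{t}=\app{\xbar}{t}=t$, using Lemma~\ref{Lrxr}, Axiom~\ref{ax9a}, Axiom~\ref{AR} and Axiom~\ref{ax8}. That identification of two a priori different witnesses is the actual content: it is what the immediately following Corollary (uniqueness of $z$ in Axiom~\ref{ax9}) appeals to, and it is what makes the symmetry axiom do genuine work. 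Choosing $s=t$ by fiat proves a statement that is formally the same but mathematically empty, and it would leave the Corollary unsupported. To repair your proof, keep your observation that existence is immediate from Axiom~\ref{ax9}, but then reproduce the equational chain above to show any two such witnesses are equal, rather than assuming the substitution property.
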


\begin{proof}
\parbox[t]{\proofwidth\linewidth}{
    The existence of $s$ and $t$ is assured by Axiom~\ref{ax9}: \\
    $\IMPLIES\ax{ax9} \EXISTS s: \xbar = \app{s}{x}$ \\
    $\IMPLIES\ax{ax9} \EXISTS t: \xbar = \app{t}{y}$ \\[2mm]
    Furthermore, Axiom~\ref{ax9} guarantees the equivalence of $\app{s}{x}$ and $\app{t}{y}$: \\
    $\app{s}{y} = \app{s}{x} =\ax{ax9} \xbar =\ax{ax9} \app{t}{y} = \app{t}{x}$ \\[2mm]
    Finally, we prove the equivalence of $s$ and $t$: \\
    $s =\lm{Lrxr} \app{s}{\xbar} = \app{s}{\app{t}{y}} =\ax{ax9a} \app{s}{\app{y}{t}} =\ax{AR} \app{\app{s}{y}}{t} = 
     \app{\xbar}{t} =\ax{ax8} t$}
\end{proof}

\begin{lemma}\label{lm2b}
    $\FORALL x, y: \BLOCK{x = y} \IMPLIES \EXISTS s, t: \BLOCK{\app{x}{s} = \app{y}{t} = \xbar} \AND \BLOCK{s = t}$
\end{lemma}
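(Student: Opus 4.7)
The plan is to mirror the proof of Lemma~\ref{lm2a}, replacing one-sided inverses with the two-sided inverses produced by the symmetry axiom. The key observation is that Axiom~\ref{ax9} alone only guarantees an element $s$ with $\app{s}{x} = \xbar$, i.e.\ a right-inverse of $x$; to get $\app{x}{s} = \xbar$ we must invoke Axiom~\ref{ax9a}, which is exactly what distinguishes \emph{DGSS} from \emph{DGS}.

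First, I would apply Axiom~\ref{ax9a} to $x$ and, independently, to $y$, obtaining $s$ and $t$ satisfying
$\app{s}{x} = \app{x}{s} = \xbar$ and $\app{t}{y} = \app{y}{t} = \xbar$.
The required equality $\app{x}{s} = \app{y}{t} = \xbar$ is then immediate from the choice of $s$ and $t$ (both sides equal $\xbar$ by construction, with no use of the hypothesis $x = y$ yet).

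Second, to prove $s = t$, I would start from Lemma~\ref{Lrxr} and rewrite $\xbar$ using successively the relation on $y, t$, the hypothesis $x = y$, the Association Rule, and the relation on $s, x$:
\[
s = \app{s}{\xbar} = \app{s}{\app{y}{t}} = \app{s}{\app{x}{t}} = \app{\app{s}{x}}{t} = \app{\xbar}{t} = t,
\]
where the final equality uses Axiom~\ref{ax8}. Note that this chain essentially dualizes the one in Lemma~\ref{lm2a}: there one consumes $\app{t}{y} = \xbar$ on the right and then the symmetry of Axiom~\ref{ax9a} lets us re-associate to expose $\app{s}{y} = \xbar$; here, since the right-inverses $\app{x}{s}, \app{y}{t}$ are given directly, symmetry is needed instead to extract the \emph{left}-inverse $\app{s}{x} = \xbar$ after re-association.

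The step that requires the most care is not conceptual but clerical: one must pick the correct one of the two equalities supplied by Axiom~\ref{ax9a} at each substitution, and correctly use $x = y$ to swap $y$ for $x$ inside the term $\app{y}{t}$ so that, after the Association Rule pushes the parenthesization to the left, the subterm $\app{s}{x}$ is in position to collapse to $\xbar$. Once this sequence is laid out, each individual step is a single application of a previously established axiom or lemma.
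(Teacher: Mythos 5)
Your proposal is correct and follows essentially the same route as the paper: invoke the symmetry axiom (Axiom~\ref{ax9a}) to get two-sided inverses $s$ of $x$ and $t$ of $y$, note $\app{x}{s} = \app{y}{t} = \xbar$ by construction, and then derive $s = t$ via the chain $s =\lm{Lrxr} \app{s}{\xbar} = \app{s}{\app{y}{t}} =\ax{AR} \app{\app{s}{y}}{t} = \app{\xbar}{t} =\ax{ax8} t$, using $x = y$ to collapse $\app{s}{y}$ (or, in your ordering, $\app{s}{x}$) to $\xbar$. The only cosmetic difference is that the paper first invokes Axiom~\ref{ax9} and then flips with Axiom~\ref{ax9a}, and applies the hypothesis $x = y$ after re-associating rather than before; the content is identical.
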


\begin{proof}
\parbox[t]{\proofwidth\linewidth}{
    The existence of $s$ and $t$ is assured by Axiom~\ref{ax9}: \\
    $\IMPLIES \ax{ax9} \EXISTS s: \xbar = \app{s}{x}$ \\
    $\IMPLIES \ax{ax9} \EXISTS t: \xbar = \app{t}{y}$ \\[2mm]
    Furthermore, Axiom~$\ref{ax9}$ guarantees the equivalence of $\app{s}{x}$ and $\app{t}{y}$: \\
    $\app{x}{s} =\ax{ax9a} \app{s}{x} =\ax{ax9} \xbar =\ax{ax9} \app{t}{y} =\ax{ax9a} \app{y}{t}$ \\[2mm]
    $s =\lm{Lrxr} \app{s}{\xbar} = \app{s}{\app{y}{t}} =\ax{AR} \app{\app{s}{y}}{t} = \app{\xbar}{t} = \ax{ax8} t$}
\end{proof}

\begin{corollary}
    The symmetry condition (Axiom~\ref{ax9a}) assures the uniqueness of $z$ in Axiom~\ref{ax9}: 
    $\FORALL y \EXISTSONE z: \BLOCK{\app{z}{y} = \xbar}$.
\end{corollary}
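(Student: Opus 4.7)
Existence of $z$ is already provided by Axiom~\ref{ax9} (equivalently, by the first conjunct of Axiom~\ref{ax9a}), so the entire task is to establish uniqueness. I would fix $y$ and suppose $z_1, z_2$ are two arbitrary witnesses satisfying $\app{z_1}{y} = \app{z_2}{y} = \xbar$, with the goal of deriving $z_1 = z_2$.

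The key idea is to use the \emph{symmetric} witness granted by Axiom~\ref{ax9a} as a pivot. Concretely, Axiom~\ref{ax9a} supplies some distinguished $z_0$ for which both $\app{z_0}{y} = \xbar$ and $\app{y}{z_0} = \xbar$ hold. The plan is to show that every witness $z_1$ with $\app{z_1}{y} = \xbar$ must coincide with this $z_0$; uniqueness then follows by transitivity, since the same argument applied to $z_2$ gives $z_2 = z_0 = z_1$.

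The verification is a direct chain mirroring the final step in the proof of Lemma~\ref{lm2a}: start from $z_1 = \app{z_1}{\xbar}$ via Lemma~\ref{Lrxr}, rewrite $\xbar$ as $\app{y}{z_0}$ using the symmetric property of $z_0$, regroup by Axiom~\ref{AR} to obtain $\app{\app{z_1}{y}}{z_0}$, replace $\app{z_1}{y}$ by $\xbar$ using the hypothesis on $z_1$, and finally collapse $\app{\xbar}{z_0} = z_0$ via Axiom~\ref{ax8}. Thus $z_1 = z_0$, as required. Alternatively, one can invoke Lemma~\ref{lm2a} directly with $x := y$ (trivially $x = y$), taking $s := z_1$ and $t := z_0$.

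No serious obstacle is anticipated; the computation is essentially already performed inside the proof of Lemma~\ref{lm2a}. The only conceptual point worth noting is that Axiom~\ref{ax9} alone leaves open the possibility of several non-symmetric inverses coexisting, and it is precisely the two-sided character furnished by Axiom~\ref{ax9a} that makes the pivot through $\app{y}{z_0}$ available. As a by-product of the argument, since every witness is forced to equal the symmetric $z_0$, every witness of Axiom~\ref{ax9} is itself automatically symmetric.
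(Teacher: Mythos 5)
Your proof is correct and follows essentially the same route as the paper, which states the corollary as an immediate consequence of Lemmas~\ref{lm2a} and~\ref{lm2b}: your chain $z_1 = \app{z_1}{\xbar} = \app{z_1}{\app{y}{z_0}} = \app{\app{z_1}{y}}{z_0} = \app{\xbar}{z_0} = z_0$ is exactly the computation closing the proof of Lemma~\ref{lm2a}. Your pivot formulation is in fact slightly tidier, since it invokes the symmetry of Axiom~\ref{ax9a} only for the one distinguished witness $z_0$ rather than for every witness supplied by Axiom~\ref{ax9}.
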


Moreover, we can prove the following properties of \emph{DGSS}:

\begin{lemma}\label{lm2c}
    $\FORALL x, y, z: \BLOCK{\app{z}{x} = \app{z}{y} = \xbar} \IMPLIES \BLOCK{x = y}$
\end{lemma}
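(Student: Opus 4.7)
The plan is to upgrade the hypothesis $\app{z}{x} = \xbar$ into its symmetric partner $\app{x}{z} = \xbar$, after which a short associative chain collapses $x$ to $y$.

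First, I would apply Axiom~\ref{ax9a} to $x$ to produce some $z'$ with $\app{z'}{x} = \app{x}{z'} = \xbar$. Combined with the hypothesis $\app{z}{x} = \xbar$, the uniqueness assertion of the preceding Corollary forces $z' = z$, so in particular $\app{x}{z} = \xbar$. (Equivalently, one can feed $\app{z}{x} = \xbar$ directly into Lemma~\ref{lm2b} with both arguments equal to $x$ to extract the right inverse, but invoking uniqueness is cleaner.)

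Next, using Lemma~\ref{Lrxr} together with Axioms~\ref{AR} and~\ref{ax8}, I would compute
\[
x = \app{x}{\xbar} = \app{x}{\app{z}{y}} = \app{\app{x}{z}}{y} = \app{\xbar}{y} = y,
\]
where the second equality uses the hypothesis $\app{z}{y} = \xbar$ and the fourth uses $\app{x}{z} = \xbar$ obtained above.

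The main obstacle is really the first step. Without invoking symmetry plus uniqueness, we would only have the left-inversion $\app{z}{x} = \xbar$, and the decisive simplification $\app{\app{x}{z}}{y} = \app{\xbar}{y}$ would be unreachable; the plain associativity rule alone does not let us pivot $z$ across $x$. Once symmetry supplies $\app{x}{z} = \xbar$, the rest is just associativity together with the two identity roles of $\xbar$ given by Lemma~\ref{Lrxr} on the right and Axiom~\ref{ax8} on the left.
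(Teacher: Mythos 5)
Your proof is correct and essentially the paper's: the paper's entire argument is the single chain $x = \app{x}{\xbar} = \app{x}{\app{z}{y}} = \app{\app{x}{z}}{y} = \app{\app{z}{x}}{y} = \app{\xbar}{y} = y$, where the swap of $\app{x}{z}$ for $\app{z}{x}$ is attributed directly to Axiom~\ref{ax9a}. You reproduce exactly this chain, differing only in how that swap is justified --- via the uniqueness Corollary rather than by reading Axiom~\ref{ax9a} as a commutation rule --- which is, if anything, the more scrupulous reading, since that axiom only asserts the existence of some symmetric inverse.
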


\begin{proof}
    $x =\lm{Lrxr} \app{x}{\xbar} = \app{x}{\app{z}{y}} =\ax{AR} \app{\app{x}{z}}{y} =\ax{ax9a} \app{\app{z}{x}}{y} = 
     \app{\xbar}{y} =\ax{ax8} y$
\end{proof}

Analogously:

\begin{lemma}\label{lm2d}
    $\FORALL x, y, z: \BLOCK{\app{x}{z} = \app{y}{z} = \xbar} \IMPLIES \BLOCK{x = y}$
\end{lemma}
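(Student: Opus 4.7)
The plan is to mirror the proof of Lemma~\ref{lm2c}, exploiting the fact that the only asymmetry between the two statements is in which argument position the distinguished element appears. Whereas Lemma~\ref{lm2c} started by rewriting $x$ as $\app{x}{\xbar}$ via Lemma~\ref{Lrxr} and then substituted $\app{z}{y}$ for $\xbar$, here I would reverse the roles and rewrite $x$ as $\app{\xbar}{x}$ via Axiom~\ref{ax8}, then substitute $\app{y}{z}$ for $\xbar$ from the hypothesis.

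Concretely, starting from the hypothesis $\app{x}{z} = \app{y}{z} = \xbar$, I would form the chain
\[
x \;=\; \app{\xbar}{x} \;=\; \app{\app{y}{z}}{x} \;=\; \app{y}{\app{z}{x}} \;=\; \app{y}{\app{x}{z}} \;=\; \app{y}{\xbar} \;=\; y,
\]
where the equalities are justified, in order, by Axiom~\ref{ax8}, the hypothesis $\app{y}{z} = \xbar$, the Association Rule (Axiom~\ref{AR}), the symmetry Axiom~\ref{ax9a}, the hypothesis $\app{x}{z} = \xbar$, and Lemma~\ref{Lrxr}. Each step is a direct mirror of a step in the chain used for Lemma~\ref{lm2c}, merely with ax8 in place of Lrxr at the start and Lrxr in place of ax8 at the end.

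The one step that deserves attention is the symmetry rewrite $\app{z}{x} = \app{x}{z}$, tagged by Axiom~\ref{ax9a}. This is the analogue of the corresponding step in Lemma~\ref{lm2c} and rests on the same justification: since $\app{x}{z} = \xbar$, the element $z$ coincides with the $z^{*}$ guaranteed by Axiom~\ref{ax9a} for which both $\app{z^{*}}{x} = \app{x}{z^{*}} = \xbar$, uniqueness of this inverse being supplied by Lemma~\ref{lm2b}. Thus $\app{z}{x} = \xbar = \app{x}{z}$, so in particular the two sides agree inside the term. I expect this to be the only nontrivial point, and it is essentially already spelled out by the paper's use of ax9a in the proof of Lemma~\ref{lm2c}; everything else is a direct symbolic rewrite.
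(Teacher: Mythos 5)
Your proof is correct, but it follows a different route than the paper. The paper disposes of Lemma~\ref{lm2d} in one line: it applies the symmetry condition (Axiom~\ref{ax9a}) to the hypothesis itself, turning $\BLOCK{\app{x}{z} = \app{y}{z} = \xbar}$ into $\BLOCK{\app{z}{x} = \app{z}{y} = \xbar}$, and then invokes Lemma~\ref{lm2c} as a black box. You instead bypass Lemma~\ref{lm2c} entirely and give a self-contained mirrored chain $x = \app{\xbar}{x} = \app{\app{y}{z}}{x} = \app{y}{\app{z}{x}} = \app{y}{\app{x}{z}} = \app{y}{\xbar} = y$, exchanging the roles of Axiom~\ref{ax8} and Lemma~\ref{Lrxr} at the two ends. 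Both arguments hinge on the same delicate point: Axiom~\ref{ax9a} is an existential statement, so flipping $\app{x}{z} = \xbar$ into $\app{z}{x} = \xbar$ strictly requires knowing that $z$ coincides with the two-sided inverse whose existence the axiom asserts. The paper leaves this implicit (both here and in its own use of Axiom~\ref{ax9a} inside the proof of Lemma~\ref{lm2c}, with the uniqueness relegated to the corollary after Lemma~\ref{lm2b}); you make it explicit via the uniqueness argument, which is arguably more careful, at the cost of duplicating the computational work that Lemma~\ref{lm2c} already encapsulates. So the trade-off is brevity and reuse (the paper) versus a self-contained derivation with the symmetry step fully justified (yours); neither has a gap beyond the level of rigor the paper itself adopts.
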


\begin{proof}
    $\BLOCK{\app{x}{z} = \app{y}{z} = \xbar} \EQUALS\ax{ax9a} \BLOCK{\app{z}{x} = \app{z}{y} = \xbar} \IMPLIES\lm{lm2c} 
     \BLOCK{x = y}$
\end{proof}

Finally, we prove that in \emph{DGSS} extensionality and the substitution property of equality hold:

\begin{proposition}\label{pr2e}
    $\FORALL x, y, z: \BLOCK{\app{z}{x} = \app{z}{y}} \EQUALS \BLOCK{x = y}$
\end{proposition}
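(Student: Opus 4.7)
The statement is a biconditional, and the two directions have very different character. The implication $[x=y] \Longrightarrow [\app{z}{x} = \app{z}{y}]$ is immediate from the Deduction Rules (Axiom~\ref{DR}): reflexivity gives $\app{z}{x} = \app{z}{x}$, and substituting the equal term $y$ for $x$ yields the conclusion. So the content of the proposition is concentrated in the forward implication, which asserts that the application operator is right-cancellative.

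My plan for the forward direction is to mimic the classical group-theoretic cancellation argument, using Axiom~\ref{ax9} to manufacture a left-inverse for $z$. Concretely, given any $z$, Axiom~\ref{ax9} produces some $w$ with $\app{w}{z} = \xbar$. Starting from the hypothesis $\app{z}{x} = \app{z}{y}$, I would apply $w$ on the left (legitimate by reflexivity and substitution) to obtain $\app{w}{\app{z}{x}} = \app{w}{\app{z}{y}}$, then regroup both sides via the Association Rule (Axiom~\ref{AR}) to get $\app{\app{w}{z}}{x} = \app{\app{w}{z}}{y}$. Substituting $\app{w}{z} = \xbar$ gives $\app{\xbar}{x} = \app{\xbar}{y}$, and finally Axiom~\ref{ax8} collapses both sides to $x = y$.

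An even shorter route — which I would probably prefer in the write-up — bypasses the explicit manipulation of the hypothesis by computing $x$ directly: $x = \app{\xbar}{x} = \app{\app{w}{z}}{x} = \app{w}{\app{z}{x}} = \app{w}{\app{z}{y}} = \app{\app{w}{z}}{y} = \app{\xbar}{y} = y$, citing ax8, ax9, AR, the hypothesis, AR, ax9, ax8 in that order.

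There is no serious obstacle here. The only subtlety is noting that the symmetry Axiom~\ref{ax9a} is not actually required for this proposition — the existence of a one-sided inverse provided by Axiom~\ref{ax9} is enough for right-cancellation, so the argument works already in \emph{DGS}. This is conceptually why the proof is so much shorter than the case-splitting verifications of Proposition~\ref{ER} and Proposition~\ref{pr01}: in \emph{DGSS} the structural hypothesis is abstract enough to replace a laborious enumeration of triples with a single algebraic computation.
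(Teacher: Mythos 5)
Your forward direction is sound and is essentially the paper's cancellation argument with the auxiliary lemmas inlined: the paper invokes Lemma~\ref{lm2a} to obtain $s$ with $\app{s}{\app{z}{x}} = \app{s}{\app{z}{y}} = \xbar$, regroups by Axiom~\ref{AR}, and concludes by Lemma~\ref{lm2c}; your chain $x = \app{\xbar}{x} = \app{\app{w}{z}}{x} = \app{w}{\app{z}{x}} = \app{w}{\app{z}{y}} = \app{\app{w}{z}}{y} = \app{\xbar}{y} = y$, with $w$ a left inverse of $z$ supplied by Axiom~\ref{ax9}, performs the same cancellation directly (and, unlike the paper's route through Lemmas~\ref{lm2a} and~\ref{lm2c}, does not touch Axiom~\ref{ax9a}). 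By the standards of equational reasoning the paper itself uses, this half is fine.

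The genuine gap is your treatment of the converse. In this paper equality is governed only by Axiom~\ref{DR}, which provides reflexivity, symmetry and transitivity; there is no substitution or congruence rule among the primitives. The implication $\BLOCK{x = y} \IMPLIES \BLOCK{\app{z}{x} = \app{z}{y}}$ is exactly the substitution property of equality that the abstract, and the sentence immediately preceding Proposition~\ref{pr2e}, announce as a theorem of \emph{DGSS} to be proven. Declaring it ``immediate from the Deduction Rules\dots\ by substituting the equal term $y$ for $x$'' therefore assumes precisely what is to be shown; Axiom~\ref{DR} does not license that step, and the paper proves this direction nontrivially via Axiom~\ref{ax9}, Axiom~\ref{AR} and Lemmas~\ref{lm2a} and~\ref{lm2c}, whose proofs invoke the symmetry Axiom~\ref{ax9a}. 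For the same reason your closing claim that the proposition already holds in \emph{DGS} is only argued for the cancellation direction; for the substitution direction you have offered no proof at all, so that remark is unsupported as it stands (and the parenthetical ``legitimate by reflexivity and substitution'' in your first version of the forward argument leans on the same unavailable rule, though the direct equational chain you prefer avoids it).
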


\begin{proof}
\parbox[t]{\proofwidth\linewidth}{
    \begin{enumerate}
    \setlength{\itemsep}{2mm}
        \item $\FORALL x, y, z: \BLOCK{\app{z}{x} = \app{z}{y}} \IMPLIES \BLOCK{x = y}$: \\
              $\BLOCK{\app{z}{x} = \app{z}{y}} \IMPLIES\lm{lm2a} 
               \EXISTS s: \BLOCK{\app{s}{\app{z}{x}} = \app{s}{\app{z}{y}} = \xbar}$ \\
              $\BLOCK{\app{z}{x} = \app{z}{y}} \IMPLIES 
               \BLOCK{\app{s}{\app{z}{x}} = \app{s}{\app{z}{y}} = \xbar} \EQUALS\ax{AR}$ \\
              $\BLOCK{\app{\app{s}{z}}{x} = \app{\app{s}{z}}{y} = \xbar} \IMPLIES\lm{lm2c} \BLOCK{x = y}$
        \item $\FORALL x, y, z: \BLOCK{x = y} \IMPLIES \BLOCK{\app{z}{x} = \app{z}{y}}$: \\
              $\IMPLIES\ax{ax9} \FORALL a \EXISTS s: \app{s}{a} = \xbar$ \\
              With $a = \app{z}{x}$: 
              $\xbar = \app{s}{a} = \app{s}{\app{z}{x}} =\ax{AR} \app{\app{s}{z}}{x} = \app{\app{s}{z}}{y}$ \\
              $\BLOCK{x = y} \IMPLIES\lm{lm2a} \BLOCK{\app{\app{s}{z}}{x} = \app{\app{s}{z}}{y} = \xbar} \EQUALS\ax{AR}$ \\
              $\BLOCK{\app{s}{\app{z}{x}} = \app{s}{\app{z}{y}} = \xbar} \IMPLIES\lm{lm2c} \BLOCK{\app{z}{x} = \app{z}{y}}$
    \end{enumerate}}
\end{proof}

Analogously:

\begin{proposition}\label{pr2f}
    $\FORALL x, y, z: \BLOCK{\app{x}{z} = \app{y}{z}} \EQUALS \BLOCK{x = y}$
\end{proposition}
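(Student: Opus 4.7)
The plan is to exploit the explicit ``analogously'' announced before the statement: Proposition~\ref{pr2f} is the right-sided mirror of Proposition~\ref{pr2e}, so I would run exactly the template of that earlier proof after making the systematic substitutions ``left inverse $\mapsto$ right inverse''. Concretely, Lemma~\ref{lm2b} takes over the role played by Lemma~\ref{lm2a}, and Lemma~\ref{lm2d} the role played by Lemma~\ref{lm2c}. Axiom~\ref{AR} continues to handle rebracketing, while Axiom~\ref{ax9a} is what makes the right-sided inverses available at all.

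For the forward implication $\BLOCK{\app{x}{z} = \app{y}{z}} \IMPLIES \BLOCK{x = y}$, I would first apply Lemma~\ref{lm2b} to the hypothesis to obtain a single $s$ (the two elements produced by the lemma are identified by its conclusion) with $\app{\app{x}{z}}{s} = \app{\app{y}{z}}{s} = \xbar$; regroup by Axiom~\ref{AR} as $\app{x}{\app{z}{s}} = \app{y}{\app{z}{s}} = \xbar$; and then invoke Lemma~\ref{lm2d} with $\app{z}{s}$ playing its third variable to conclude $x = y$. For the converse, starting from $x = y$ I would pick an inverse $s$ of $\app{x}{z}$ via Axiom~\ref{ax9}, reroute it to the right via Axiom~\ref{ax9a} to obtain $\app{\app{x}{z}}{s} = \xbar$, and then use Lemma~\ref{lm2b} (together with the uniqueness corollary stated between Lemma~\ref{lm2b} and Lemma~\ref{lm2c}) to transfer this equation from $x$ to $y$, producing $\app{\app{x}{z}}{s} = \app{\app{y}{z}}{s} = \xbar$; Lemma~\ref{lm2d} then yields $\app{x}{z} = \app{y}{z}$.

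The only real obstacle is the bookkeeping around these ``side-swaps''. Axiom~\ref{ax9} by itself only guarantees left inverses, so every move that needs a right inverse must first be rerouted through Axiom~\ref{ax9a}, and every rebracketing via Axiom~\ref{AR} has to be performed on the opposite side from the corresponding step in the proof of Proposition~\ref{pr2e}. Provided this discipline is kept, no new ingredients are required: the calculation is essentially the same as before, just reflected, and no nontrivial reasoning beyond what Lemmas~\ref{lm2b} and~\ref{lm2d} already package.
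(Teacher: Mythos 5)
Your proposal follows exactly the route of the paper's own proof: Lemma~\ref{lm2b} and Lemma~\ref{lm2d} replace Lemmas~\ref{lm2a} and~\ref{lm2c}, Axiom~\ref{AR} handles the regrouping $\app{\app{x}{z}}{s} = \app{x}{\app{z}{s}}$, and Axioms~\ref{ax9} and~\ref{ax9a} supply and reroute the inverse in the converse direction, just as in the mirrored argument for Proposition~\ref{pr2e}. This is correct and essentially identical to the paper's proof, so no further comparison is needed.
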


\begin{proof}
\parbox[t]{\proofwidth\linewidth}{
    \begin{enumerate}
    \setlength{\itemsep}{2mm}
        \item $\FORALL x, y, z:\BLOCK{\app{x}{z} = \app{y}{z}} \IMPLIES \BLOCK{x = y}$: \\
              $\BLOCK{\app{x}{z} = \app{y}{z}} \IMPLIES\lm{lm2b} 
               \EXISTS s: \BLOCK{\app{\app{x}{z}}{s} = \app{\app{y}{z}}{s} = \xbar}$ \\
              $\BLOCK{\app{x}{z} = \app{y}{z}} \IMPLIES 
               \BLOCK{\app{\app{x}{z}}{s} = \app{\app{y}{z}}{s} = \xbar} \EQUALS\ax{AR}$ \\
              $\BLOCK{\app{x}{\app{z}{s}} = \app{y}{\app{z}{s}} = \xbar} \IMPLIES\lm{lm2d} \BLOCK{x = y}$

        \item $\FORALL x, y, z: \BLOCK{x = y} \IMPLIES \BLOCK{\app{x}{z} = \app{y}{z}}$: \\
              $\IMPLIES\ax{ax9} \FORALL a \EXISTS s: \app{s}{a} = \xbar =\ax{ax9a} \app{a}{s}$ \\
              With $a = \app{x}{z}$: 
              $\xbar = \app{a}{s} = \app{\app{x}{z}}{s} =\ax{AR} \app{x}{\app{z}{s}} = \app{y}{\app{z}{s}}$ \\
              $\BLOCK{x = y} \IMPLIES\lm{lm2b} \BLOCK{\app{x}{\app{z}{s}} = \app{y}{\app{z}{s}} = \xbar} \EQUALS\ax{AR}$ \\
              $\BLOCK{\app{\app{x}{z}}{s} = \app{\app{y}{z}}{s} = \xbar} \IMPLIES\lm{lm2d} \BLOCK{\app{x}{z} = \app{y}{z}}$
    \end{enumerate}}
\end{proof}


\section{Application in Foundations of Mathematics}%
\label{se:Application}

\subsection{Relational sets}
\label{se:Application:RelSets}

\begin{definition}\label{df3}
    $\set{Q}$ is a relational set of objects $x$ and $x$ is an element of $\set{Q}$ if for a specific quality $\dit{q}$ holds: 
    $\app{\dit{q}}{x} = x$ [$x \IN Q$, $Q =_{df} \app{\dit{q}}{x}$].
\end{definition}

\begin{example}\label{ex1}
    Let $\dit{n}$ be the quality of being natural number.
    Thus, $\app{\dit{n}}{x}$ defines the relational set of natural numbers $\set{N} = \app{\dit{n}}{x}$.
    If $x$ is a natural number, $\app{\dit{n}}{x} = x$ holds.
\end{example}

If $\xbar = y = \dit{q}$ then $\set{Q} =_{df} \app{\dit{q}}{\dit{q}} =\axs{ax8}{ax9} \dit{q} \IMPLIES \dit{q} \IN \dit{q}$, which 
can be interpreted as a singleton, a set composed of only one element: 
$\dit{q}$ is a relational set and it is an element of itself.

Having in mind Russell's paradox \cite{Russell1938}, one can ask what happens when $\dit{p}$ is considered to be the quality of 
not being an element of itself $\NOT \BLOCK{x \IN x}$.

Let $\set{P} =_{df} \app{\dit{p}}{x}$, $\BLOCK{x \IN \set{P}} \EQUALS \BLOCK{x = \app{\dit{p}}{x}}$.

$\NOT \BLOCK{x \IN x} \EQUALS \NOT \BLOCK{x \IN \set{P}} \EQUALS \BLOCK{x \neq \app{\dit{p}}{x}}$.

As a result there exist objects which are not relational sets.

At the beginning we said that our theory calls to mind mereology, a sort of collective set theory, first formulated by 
S. Le\'{s}niewski \cite{Lesniewski1927}.
Mereology is collective in the sense that a mereological \textquotedblleft{}set\textquotedblright{} is a whole (a collective
aggregate or class) composed of \textquotedblleft{}parts\textquotedblright{} and the fundamental relation is that of being a 
\textquotedblleft{}part\textquotedblright{} of the whole, an element of a class.
Thus being an element of a class is equivalent to being a subset (proper or improper) of a class.
In this sense it is clear that every class is an element of itself.

Let us now define the concept of subset:

\begin{definition}\label{df5}
    A relational set $\set{B}$ [$\set{B} = \app{\dit{b}}{x}$] is a relational subset of a relational set 
    $\set{A}$ $\BLOCK{\set{A} = \app{\dit{a}}{x}}$, written as $\BLOCK{\app{\dit{b}}{x} \SUBSET \app{\dit{a}}{x}}$, iff 
    $\FORALL x: \lbrace \BLOCK{\app{\dit{b}}{x} = x} \IMPLIES \BLOCK{\app{\dit{a}}{x} = x} \rbrace$.
\end{definition}

Let us verify the cases $\BLOCK{x = \dit{a}}$, $\BLOCK{x = \dit{b}}$ and $\BLOCK{x = \dit{a} = \dit{b}}$:

\begin{enumerate}
\setlength{\itemsep}{2mm}
    \item If $x = \dit{a}$ then $\app{\dit{b}}{\dit{a}} = \dit{a} =\lm{Lrxr} \app{\dit{a}}{\dit{a}} \IMPLIES 
                                 \lbrace \BLOCK{\dit{a} \IN \set{B}} \IMPLIES \BLOCK{\dit{a} \IN \set{A}} \rbrace$.
    \item If $x = \dit{b}$ then $\app{\dit{b}}{\dit{b}} = \dit{b} =\ax{ax8} \app{\dit{a}}{\dit{b}} \IMPLIES 
                                 \lbrace \BLOCK{\dit{b} \IN \set{B}} \IMPLIES \BLOCK{\dit{b} \IN \set{A}} \rbrace$.
    \item If $x = \dit{a} = \dit{b}$ then 
          $\BLOCK{\app{\dit{a}}{\dit{a}} = \dit{a} \IMPLIES \app{\dit{a}}{\dit{a}} = \dit{a}} \IMPLIES 
           \BLOCK{\dit{a} \IN \set{A}}$, and analogously $\BLOCK{\dit{b} \IN \set{B}}$.
\end{enumerate}

Now we can introduce the concept of function.

\begin{definition}\label{df5b}
    A binary relation $f: \set{A} \longmapsto \set{B}$ is a function iff \\
    $\FORALL x, y, z: \lbrace \BLOCK{x \IN \set{A}} \AND \BLOCK{y, z \IN \set{B}} \rbrace \IMPLIES 
     \lbrace \BLOCK{\appp{f}{x}{y} = \appp{f}{x}{z}} \IMPLIES \BLOCK{y = z} \rbrace$, \\
    where $\BLOCK{x \IN \set{A}} \equiv \BLOCK{\app{\dit{a}}{x} = x}$ and 
    $\BLOCK{y, z \IN \set{B}} \equiv \lbrace \BLOCK{\app{\dit{b}}{y} = y} \AND \BLOCK{\app{\dit{b}}{z} = z} \rbrace$.
\end{definition}

\subsection{Peano's Axioms as Theorems of \emph{DGSS}}
\label{se:Application:Peano}

The next step would be to see whether it is possible to redefine Peano's Axioms \cite{Peano1958} in terms of relational sets.

Let us take $\dit{n}$ instead of $\xbar$ in Axiom~\ref{ax8}, which stands for the quality of being natural number.

$\app{\dit{n}}{x}$ defines a relational set of natural numbers $\BLOCK{\set{N} = \app{\dit{n}}{x}, \FORALL x \neq \dit{n}}$.
We explicitly exclude $\dit{n} = x$, because \textquotedblleft{}$\,\dit{n}\,$\textquotedblright{}, the quality of being natural 
number should not be a number itself.
Furthermore, we define a smallest natural number and call it \textquotedblleft{}$1$\textquotedblright{}, in accordance with the 
original formulation of Peano \cite{Peano1958}.

Since $1$ is a natural number, $\app{\dit{n}}{1} =\ax{ax8} 1$ holds.

Then we define a successor function (in Mathematics considered as a unary operation) $\app{1}{x}$:
$\app{1}{x}$ is called a successor of $x$.

\begin{lemma}\label{lm3}
    $\app{1}{x}$ is a function of type $f: \set{N} \longmapsto \set{N}$.
\end{lemma}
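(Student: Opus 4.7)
The plan is to verify the two ingredients of Definition~\ref{df5b} for the candidate function $f = 1$ with $\set{A} = \set{B} = \set{N}$: namely (a) that $\app{1}{x}$ lands in $\set{N}$ whenever $x \IN \set{N}$, and (b) the single-valued condition $\BLOCK{\appp{1}{x}{y} = \appp{1}{x}{z}} \IMPLIES \BLOCK{y = z}$ for $y, z \IN \set{N}$.

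For (a), I would invoke the fact that in this instantiation of \emph{DGSS} the role of $\xbar$ is played by $\dit{n}$. Axiom~\ref{ax8} then reads $\FORALL y: \app{\dit{n}}{y} = y$, so in particular $\app{\dit{n}}{\app{1}{x}} = \app{1}{x}$. By Definition~\ref{df3} this is the very statement $\app{1}{x} \IN \set{N}$, subject to the side condition $\app{1}{x} \neq \dit{n}$ that accompanies the definition of $\set{N}$; this has to be read as part of the stipulation that the quality $\dit{n}$ is not itself a number, so that no successor can accidentally coincide with the defining quality.

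For (b), I would use the Association Rule (Axiom~\ref{AR}) to rewrite $\appp{1}{x}{y} = \app{\app{1}{x}}{y}$ and likewise for $z$, so that the hypothesis takes the form $\app{\app{1}{x}}{y} = \app{\app{1}{x}}{z}$. Proposition~\ref{pr2e}, applied with $\app{1}{x}$ playing the role of the outer factor (the variable named $z$ in the statement of that proposition), then yields $y = z$ at once. The hypotheses $x, y, z \IN \set{N}$ are not even needed for this step, since extensionality holds universally in \emph{DGSS}.

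The main obstacle is not computational but interpretive. Once one identifies $\dit{n}$ with the distinguished element $\xbar$ of \emph{DGSS} and recognises $\appp{1}{x}{y}$ as $\app{\app{1}{x}}{y}$ via the Association Rule, the content of the lemma collapses onto the extensionality/substitution result already proved as Proposition~\ref{pr2e}. The real conceptual work is in matching the general notion of function from Definition~\ref{df5b} against the specific unary operation represented by the application $\app{1}{x}$, plus noting that closure into $\set{N}$ holds automatically because $\dit{n}$ acts as a left identity on every object.
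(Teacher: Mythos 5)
Your proposal is correct and follows essentially the same route as the paper: verify Definition~\ref{df5b} for $f=1$ by (a) establishing closure into $\set{N}$ via Axiom~\ref{ax8} with $\dit{n}$ in the role of $\xbar$, and (b) obtaining single-valuedness from the Association Rule (Axiom~\ref{AR}) together with Proposition~\ref{pr2e}. The only differences are cosmetic --- you apply Axiom~\ref{ax8} directly to $\app{1}{x}$ where the paper routes through $\app{\app{\dit{n}}{1}}{x}$, and you state the injectivity condition for arbitrary $y,z$ rather than with one argument specialized to $\app{1}{x}$ --- neither of which changes the substance.
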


\begin{proof}
\parbox[t]{\proofwidth\linewidth}{
    We have to prove that $\app{1}{x}$ fulfills the requirements of Definition~\ref{df5b}, i.~e. \\
    $\FORALL a, b, c: \BLOCK{\appp{f}{a}{b} = \appp{f}{a}{c}} \IMPLIES \BLOCK{b = c}$ with $f = 1$, $a = x$, $b = \app{1}{x}$, 
    $c = y$: \\[2mm]
    At first, we show $\FORALL x: \BLOCK{\app{\dit{n}}{x} = x} \IMPLIES \BLOCK{\app{\dit{n}}{\app{1}{x}} = \app{1}{x}}$: \\
    $\app{\dit{n}}{\app{1}{x}} =\ax{AR} \app{\app{\dit{n}}{1}}{x} =\ax{ax8} \app{1}{x}$ \\[2mm]
    We show now that $\FORALL x, y: \BLOCK{\appp{1}{x}{\app{1}{x}} = \app{1}{x}{y}} \IMPLIES \BLOCK{\app{1}{x} = y}$: \\
    $\FORALL x, y: \BLOCK{\appp{1}{x}{\app{1}{x}} = \appp{1}{x}{y}} \EQUALS \ax{AR} \BLOCK{\app{\app{1}{x}}{\app{1}{x}} = 
     \app{\app{1}{x}}{y}} \EQUALS\pr{pr2e}$ \\
    $\BLOCK{\app{1}{x} = y}$.}
\end{proof}

Note that terms like $\apppp{1}{1}{1}{1}$ are irreducible and can naturally be interpreted as numbers.

We claim:

\begin{proposition}\label{pr1}
    $\BLOCK{\app{\dit{n}}{x}, \app{1}{x}, 1}$ is a model of natural numbers.
\end{proposition}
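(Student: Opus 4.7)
\begin{proofindent}
The plan is to verify Peano's axioms for the triple $\BLOCK{\set{N}, S, 1}$, where $\set{N} = \app{\dit{n}}{x}$ is the relational set from Example~\ref{ex1} and $S(x) = \app{1}{x}$ is the successor map studied in Lemma~\ref{lm3}. Three of the five axioms are essentially already in the bank: $1 \IN \set{N}$ holds by the definition of the symbol $1$ (it was introduced as the smallest natural number, so $\app{\dit{n}}{1} = 1$); closure of $\set{N}$ under $S$ was established inside the proof of Lemma~\ref{lm3} by the computation $\app{\dit{n}}{\app{1}{x}} =\ax{AR} \app{\app{\dit{n}}{1}}{x} =\ax{ax8} \app{1}{x}$; and injectivity of $S$, that is $\BLOCK{\app{1}{x} = \app{1}{y}} \IMPLIES \BLOCK{x = y}$, is an immediate instance of Proposition~\ref{pr2e} with $z = 1$.

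The remaining non-trivial Peano axiom to be checked directly is that $1$ is not a successor, i.~e.\ $\FORALL x \IN \set{N}: \app{1}{x} \neq 1$. Here the plan is to argue by contradiction using the extensionality obtained in the previous subsection. Since the development replaces $\xbar$ by $\dit{n}$ in Axiom~\ref{ax8}, Lemma~\ref{Lrxr} reads $\app{r}{\dit{n}} = r$ for every $r$; in particular $\app{1}{\dit{n}} = 1$. If $\app{1}{x} = 1$ held for some $x \IN \set{N}$, we would then have $\app{1}{x} = \app{1}{\dit{n}}$, and Proposition~\ref{pr2e} would force $x = \dit{n}$. But by the explicit stipulation preceding the definition of $1$, $\dit{n}$ is excluded from $\set{N}$, so this contradicts $x \IN \set{N}$.

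The induction axiom is what I expect to be the genuine obstacle. Nothing proved so far rules out the existence of $x \IN \set{N}$ that is not obtainable from $1$ by finitely many applications of $S$; what is available is only the remark, following Lemma~\ref{lm3}, that the irreducible terms $1$, $\app{1}{1}$, $\apppp{1}{1}{1}{1}$, \dots\ are the intended naturals. The plan is therefore to interpret $\set{N}$ as the set of $\dit{n}$-fixed points generated by $1$ under $S$, and to justify induction by observing that the quality $\dit{n}$ is by construction the quality whose fixed points are exactly these iterates: if $\set{P}$ is a relational subset of $\set{N}$ (in the sense of Definition~\ref{df5}) containing $1$ and closed under $S$, then every irreducible $S$-iterate $\app{1}{\cdots \app{1}{1}\cdots}$ lies in $\set{P}$, so $\set{P} = \set{N}$. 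Assembling the five axioms then yields the proposition.
\end{proofindent}
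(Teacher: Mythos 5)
Your treatment of the first four Peano axioms coincides with the paper's: $1 \IN \set{N}$ from $\app{\dit{n}}{1} =\ax{ax8} 1$, closure of $\set{N}$ under the successor via $\app{\dit{n}}{\app{1}{x}} =\ax{AR} \app{\app{\dit{n}}{1}}{x} = \app{1}{x}$, injectivity from Proposition~\ref{pr2e}, and the contradiction $\app{1}{x} = 1 = \app{1}{\dit{n}} \IMPLIES x = \dit{n}$ using Lemma~\ref{Lrxr}, which is exactly the paper's fourth item. The divergence, and the problem, is the induction axiom.

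Your proposed justification of induction is circular: you ``interpret $\set{N}$ as the set of $\dit{n}$-fixed points generated by $1$ under $S$'' and then observe that any $\set{P}$ containing $1$ and closed under $S$ contains all iterates, hence equals $\set{N}$. But nothing in Definition~\ref{df3} or Example~\ref{ex1} stipulates that the fixed points of $\dit{n}$ are \emph{exactly} the terms $1, \app{1}{1}, \appp{1}{1}{1}, \dots$; the claim that no other fixed points exist is precisely what the induction axiom asserts, so you are assuming the conclusion rather than deriving it from Axioms~\ref{ax8},~\ref{ax9},~\ref{ax9a}. The paper instead stays inside the equational calculus: writing $\BLOCK{k \IN \set{M}} \equiv \BLOCK{\app{\dit{m}}{k} = k}$, it uses Lemma~\ref{Lrxr} to get $\app{\dit{m}}{1} = 1 = \app{1}{\dit{m}}$, then Proposition~\ref{pr2e} and Axiom~\ref{AR} to show $\BLOCK{\app{\dit{m}}{k} = k} \EQUALS \BLOCK{\app{\dit{m}}{\app{1}{k}} = \app{1}{k}}$, and finally concludes from $\BLOCK{\app{\dit{m}}{k} = k} \AND \BLOCK{\app{\dit{n}}{k} = k}$ that $\app{\dit{m}}{k} = \app{\dit{n}}{k}$ for all $k$, which by Definition~\ref{df3} and the deduction rules is read as $\set{M} = \set{N}$. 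Whatever one thinks of the strength of that argument, it is an algebraic derivation within \emph{DGSS} and makes no appeal to an external notion of ``reachable from $1$ in finitely many steps''; your version needs either that algebraic route or an explicit additional hypothesis pinning down the fixed points of $\dit{n}$, and as written it has a genuine gap.
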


\begin{proof}
    We prove that all Peano Axioms are theorems of \emph{DGSS}:

\begin{proofindent}
    \begin{enumerate}
    \setlength{\itemsep}{2mm}
        \item $\app{\dit{n}}{1} =\ax{ax8}1 \IMPLIES\df{df3} 1 \IN \set{N}$
        \item $\FORALL x: \BLOCK{\app{\dit{n}}{x} = x} \IMPLIES \BLOCK{\app{\dit{n}}{\app{1}{x}} = \app{1}{x}}$: \\
              $\app{\dit{n}}{\app{1}{x}} =\ax{AR} \app{\app{\dit{n}}{1}}{x} =_{(1)} \app{1}{x}$

        \item $\lbrace \FORALL x, y: \BLOCK{\app{\dit{n}}{x}=x} \AND \BLOCK{\app{\dit{n}}{y} = y} \AND 
                                     \BLOCK{\app{1}{x} = \app{1}{y}} \rbrace \IMPLIES \BLOCK{x = y}$: \\
              $\FORALL x, y: \BLOCK{\app{1}{x} = \app{1}{y}} \IMPLIES \pr{pr2e} \BLOCK{x = y}$
        \item $\FORALL x: \BLOCK{\app{\dit{n}}{x} = x} \IMPLIES \app{1}{x} \neq 1$: \\
              Assume $\EXISTS x: \app{1}{x} = 1$ for $x \neq \dit{n}$: \\
              $\app{\dit{n}}{1} =_{(1)} 1 =\lm{Lrxr} \app{1}{\dit{n}}$ \\
              $\BLOCK{\app{1}{x} = \app{1}{\dit{n}}} \IMPLIES\pr{pr2e} \BLOCK{x = \dit{n}}$ --- contradiction.
        \item $\lbrace \BLOCK{\set{M} \SUBSET \set{N}} \AND \BLOCK{1 \IN \set{M}} \AND 
               \lbrace \FORALL k: \BLOCK{k \IN \set{M}} \IMPLIES \BLOCK{\app{1}{k} \IN \set{M}} \rbrace \rbrace \IMPLIES 
               \BLOCK{\set{M} = \set{N}}$: \\
              With $\BLOCK{x \IN \set{M}} \equiv \BLOCK{\app{\dit{m}}{x} = x}$: \\
              $\app{\dit{m}}{1} = 1 =\lm{Lrxr} \app{1}{\dit{m}}$ \\
              $\FORALL k: \BLOCK{\app{\dit{m}}{k} = k} \EQUALS\pr{pr2e} 
                          \BLOCK{\app{1}{\app{\dit{m}}{k}} = \app{1}{k}} \EQUALS\ax{AR} 
                          \BLOCK{\app{\app{1}{\dit{m}}}{k} = \app{1}{k}} \EQUALS$ \\
                         $\BLOCK{\app{\app{\dit{m}}{1}}{k} = \app{1}{k}} \EQUALS \ax{AR} 
                          \BLOCK{\app{\dit{m}}{\app{1}{k}} = \app{1}{k}}$ \\
              $\IMPLIES_{(1)} \app{\dit{n}}{1} = 1$ \\
              $\IMPLIES_{(2)} \FORALL k: \BLOCK{\app{\dit{n}}{k} = k} \IMPLIES \BLOCK{\app{\dit{n}}{\app{1}{k}} = \app{1}{k}}$ \\
              $\FORALL k: \lbrace \BLOCK{\app{\dit{m}}{k} = k} \AND \BLOCK{\app{\dit{n}}{k} = k} \rbrace \IMPLIES\ax{DR} 
               \FORALL k: \BLOCK{\app{\dit{m}}{k} = \app{\dit{n}}{k}} \equiv\df{df3} \BLOCK{\set{M} = \set{N}}$
    \end{enumerate}
\end{proofindent}
\end{proof}

Natural numbers can be defined as follows:

\begin{enumerate}
\setlength{\itemsep}{2mm}
    \item $1$ is a natural number.
    \item The successor of $1$ is $\app{1}{1} =_{df} 2$
    \item $\app{1}{2} = \app{1}{\app{1}{1}} = \app{\app{1}{1}}{1} = \app{2}{1} =_{df} 3$
    \item $\app{1}{3} = \app{1}{\app{1}{2}} = \app{\app{1}{1}}{2} = \app{2}{2} = \app{2}{\app{1}{1}} = \app{\app{2}{1}}{1} = 
           \app{3}{1} =_{df} 4$ etc.
\end{enumerate}

\subsection{The Number Zero}%
\label{se:Application:Zero}

To reconstruct natural numbers we have been considering $\xbar$ in Axiom~\ref{ax8} as a kind of quality and we assumed that 
$\dit{n} = \xbar \neq x \:\FORALL x$.
Thus $\dit{n}$ should be different from all natural numbers.

We now define another fundamental number \textquotedblleft{}$0$\textquotedblright{} and put $\dit{n} = 0$.
We can verify that Proposition~\ref{pr1} holds for every $x \neq 0$:

\begin{enumerate}
\setlength{\itemsep}{2mm}
    \item $\app{0}{1} = 1$
    \item $\FORALL x \neq 0: \BLOCK{\app{0}{x} = x} \IMPLIES \BLOCK{\app{0}{\app{1}{x}} = \app{1}{x}}$
    \item $\lbrace \FORALL x, y \neq 0: \BLOCK{\app{0}{x} = x} \AND \BLOCK{\app{0}{y} = y} \AND 
                                        \BLOCK{\app{1}{x} = \app{1}{y}} \rbrace \IMPLIES \BLOCK{x = y}$
    \item $\FORALL x \neq 0: \BLOCK{\app{0}{x} = x} \IMPLIES \app{1}{x} \neq 1$
    \item $\lbrace \BLOCK{\app{\dit{m}}{k} \SUBSET \app{0}{k}} \AND \BLOCK{\app{\dit{m}}{1} = 1} \AND$ \\
          $\lbrace \FORALL k: \BLOCK{\app{\dit{m}}{k} = k} \IMPLIES 
                     \BLOCK{\app{\dit{m}}{\app{1}{k}} = \app{1}{k}} \rbrace \rbrace \IMPLIES \BLOCK{\dit{m} = 0}$
\end{enumerate}

If we would allow $x = 0$ in Proposition~\ref{pr1}, we would produce a contradiction in the fourth Peano Axiom: 
$\BLOCK{\app{1}{0} \neq 1}$ and $\BLOCK{\app{1}{0} = 1}$ by Lemma~\ref{Lrxr} would imply $\BLOCK{1 \neq 1}$.

Thanks to Axiom~\ref{ax8}, which states $\FORALL x: \app{0}{x} = x$ we can easily define all natural numbers beginning from $0$:

\begin{example}\label{ex4}
    \setlength{\tabcolsep}{0mm}
    \begin{tabular}[t]{l}
        $2$ is a natural number, hence $\app{0}{2} = 2$. \\
        $\app{0}{2} =_{df} \app{0}{\app{1}{1}} =\ax{AR} \app{\app{0}{1}}{1} =_{(1)} \app{1}{1} =_{df} 2$.
    \end{tabular}
\end{example}

Many mathematicians introduce $0$ as a first natural number, but in \emph{RC} $0$ cannot be considered a natural number.
Thanks to Lemma~\ref{Lrxr}, i.~e. $\app{1}{0} = 1$, read \textquotedblleft{}$1$ is the successor of $0$\textquotedblright{}, the 
intended meaning of $0$ and $1$ is fully preserved, and no additional axioms are needed.


\section{Conclusions}%
\label{se:Conclusions}

We found the concept of Fundamental Relation, as defined by E. De Giorgi, necessary but not sufficient to express a dynamic 
behavior of relations.
To do this, we have introduced three distinct unary relations which form a Dynamic Identity Triple (\emph{DIT}).
The three basic relations obey two very simple axioms and if desired a third axiom of symmetry (\emph{DITS}) can be added.
The relations in a model interact in a way that any two of them cannot \textquotedblleft{}operate\textquotedblright{} at the 
same time: a phenomenon which can be characterized by the word \emph{dynamic}.

Using classical rules of quantification, we modify the basic models and obtain the Dynamic Generative System (\emph{DGS}), 
which can also be enhanced by a symmetry condition (\emph{DGSS}).
After defining rules for a Relational Calculus, we define some basic set theoretic notions, the concept of function and find 
that Peano's Axioms and extensionality and the substitution property of equality are theorems of \emph{DGSS}.
In addition, it becomes clear that the number zero can be added to the system of Peano's Axioms, but cannot be considered a 
natural number.


\nocite{Engeler1992}
\nocite{Frege1884}

\end{document}